\newtheorem{theorem}{Theorem}[section]
\newtheorem{lemma}[theorem]{Lemma}
\newtheorem{corollary}[theorem]{Corollary}
\theoremstyle{definition}
\theoremstyle{remark}
\newtheorem*{remark}{Remark}
\numberwithin{equation}{section}
\def\Re{\mathrm{Re}}
\def\R{{\mathbb R}}
\def\C{{\mathbb C}}
\begin{document}

\date{7 December 2010}

\author{Neal Bez}
\address{Neal Bez, School of Mathematics, The Watson Building, University of Birmingham, Edgbaston,
Birmingham, B15 2TT, England} \email{n.bez@bham.ac.uk}
\subjclass{Primary  35B45; Secondary  35L05}

\keywords{Strichartz estimates; wave equation; sharp constants}

\author{Keith M. Rogers}
\address{Keith Rogers, Instituto de Ciencias Matematicas CSIC-UAM-UC3M-UCM,
Madrid 28049, Spain} \email{keith.rogers@icmat.es}
\begin{thanks} {The second author is supported in part by the Spanish grant MTM2010-16518.}
\end{thanks}
\title[A sharp Strichartz estimate for the wave equation]{A sharp Strichartz estimate for the wave equation with data in the energy space}
\maketitle

\begin{abstract}
We prove a sharp bilinear estimate for the wave equation from which
we obtain the sharp constant in the Strichartz estimate which
controls the $L^{4}_{t,x}(\R^{5+1})$ norm of the solution in terms
of the energy. We also characterise the maximisers.
\end{abstract}

\section{Introduction}

For $d\ge 2$, we consider the wave equation $
\partial_{tt}u = \Delta u$ on $\R^{d+1}$.
Strichartz \cite{st} proved that
\begin{equation}\label{half}
\|u\|_{L^p_{t,x}(\R^{d+1})} \leq C \Big(\|u(0)\|^2_{\dot{H}^{\frac{1}{2}}(\mathbb{R}^d)}+\|\partial_t
u(0)\|^2_{\dot{H}^{-\frac{1}{2}}(\mathbb{R}^d)}\Big)^{1/2},\quad p = \frac{2(d+1)}{d-1}.
\end{equation}
Here, $\dot{H}^s(\mathbb{R}^d)$ denotes the homogeneous Sobolev
space with norm
$$
\| f \|_{\dot{H}^s(\mathbb{R}^d)} = \|(-\Delta)^{s/2} f\|_{L^2(\mathbb{R}^d)},
$$
where $(-\widehat{\Delta)^{s/2} f(}\xi) =|\xi|^{s} \widehat{f}(\xi),
$ and $\,\widehat{\,}\,\,$ is the Fourier transform defined by
$$
\widehat{f}(\xi)=\int_{\R^d} f(x)\exp(-ix\cdot\xi)\,dx.
$$
Foschi \cite{Foschi} found the sharp constant in \eqref{half} for
$d=3$ and a characterisation of the data $(u(0),\partial_tu(0))$ for
which the constant is attained.

For $d\ge 3$, by interpolation and Sobolev embedding,  \eqref{half}
yields
\begin{equation}\label{sobst} \|u\|_{L^p_{t,x}(\R^{d+1})}
\leq C\Big(\|\nabla
u(0)\|^2_{L^2(\mathbb{R}^d)}+\|\partial_tu(0)\|^2_{L^2(\mathbb{R}^d)}\Big)^{1/2}, \quad p =
\frac{2(d+1)}{d-2}.
\end{equation}
This estimate has found a great deal of application in the nonlinear
theory. Indeed, the standard blow-up criterion for the focussing
energy-critical equation is written in terms of the
$L^p_{t,x}(\mathbb{R}^{d+1})$ norm with $p = \frac{2(d+1)}{d-2}$
(see for example \cite{keme}). Thus, it seems of interest to know
the data which maximise~\eqref{sobst}. That such data exist is due
to Bulut \cite{bulut} (see also \cite{bage}).

In this article we prove the following sharp bilinear inequality for
the one-sided wave propagator $e^{it\sqrt{-\Delta}}$ given by
$$
e^{it\sqrt{-\Delta}}f(x) = \frac{1}{(2\pi)^d}\int_{\R^d}
\widehat{f}(\xi) \exp\big(i(x\cdot\xi+t|\xi|)\big)\,d\xi.
$$
The solution to the wave equation can be written as $u = u_+ + u_-$,
where $$ u_+(t) = e^{it\sqrt{-\Delta}}f_+\quad \text{and}\quad
u_-(t) = e^{-it\sqrt{-\Delta}}f_-,$$ and\footnote{in \cite{Foschi}
the functions $f_{\pm}$ are defined slightly differently}
$$
u(0)=f_++f_-\quad \text{and}\quad
\partial_tu(0)=i\sqrt{-\Delta}\,\big(f_+-f_-\big).
$$
From this bilinear inequality, we will deduce the sharp constant in
the energy-Strichartz estimate \eqref{sobst} for $d=5$, and
characterise the maximising data.

\begin{theorem}\label{fq2} Let $d\ge 2$. Then the inequality
\begin{align}\label{e:FoschiCarneirohighd}
\big\| e^{it\sqrt{-\Delta}}f_1
\,e^{it\sqrt{-\Delta}}f_2\big\|_{L^{2}_{t,x}(\R^{d+1})}^{2}\nonumber&\\\begin{split}
&\!\!\!\!\!\!\!\!\!\!\!\!\!\!\!\!\!\!\!\!\!\!\!\!\!\!\!\!\!\!\!\!\!\!\!\!\!\!\!\!\!\!\!\!\!\!\!\!\!\!
\leq {\emph{W}}(d,2) \int_{\R^{2d}}
|\widehat{f}_1(\xi_1)|^2|\widehat{f}_2(\xi_2)|^2
|\xi_1|^{\frac{d-1}{2}}|\xi_2|^{\frac{d-1}{2}}\bigg(1-\frac{\xi_1\cdot\xi_2}{|\xi_1||\xi_2|}\bigg)^{\frac{d-3}{2}}\,d\xi_1d\xi_2
\end{split}\end{align}
holds with constant given by
$$
{\emph{W}}(d,2) = 2^{-\frac{d-1}{2}}(2\pi)^{-3d+1}|\mathbb{S}^{d-1}|.
$$
For $d\ge 3$, the constant is sharp and is attained  if and only if
\begin{equation*}|\xi|\widehat{f_j}(\xi)=\exp(a|\xi|+b\cdot\xi+c_j),\end{equation*}
where $a,c_1,c_2 \in \C$, $b\in\C^d$, $\Re(a)<0$ and
$|\Re(b)|<-\Re(a)$.
\end{theorem}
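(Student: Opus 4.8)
The plan is to pass to the space--time Fourier side, reduce to an $L^2$ bound for a convolution of two measures carried by the light cone, and then read off both the sharp constant and the maximisers from a single Cauchy--Schwarz inequality. Writing $g_j=\widehat{f_j}$ and lifting each datum to the cone via $G_j(\xi,\tau)=g_j(\xi)\,\delta(\tau-|\xi|)$, the product $e^{it\sqrt{-\Delta}}f_1\,e^{it\sqrt{-\Delta}}f_2$ equals $(2\pi)^{-2d}$ times the inverse space--time Fourier transform of $G_1*G_2$, where
\[(G_1*G_2)(\eta,\tau)=\int_{\R^d}g_1(\xi)\,g_2(\eta-\xi)\,\delta\big(\tau-|\xi|-|\eta-\xi|\big)\,d\xi.\]
Plancherel in $(x,t)$ then gives $\|e^{it\sqrt{-\Delta}}f_1\,e^{it\sqrt{-\Delta}}f_2\|_{L^2_{t,x}}^2=(2\pi)^{-3d+1}\|G_1*G_2\|_{L^2(\R^{d+1})}^2$, so the whole problem reduces to bounding this convolution.

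For fixed $(\eta,\tau)$ the convolution is an integral of $g_1(\xi_1)g_2(\xi_2)$ over the fibre $\{\xi_1+\xi_2=\eta,\ |\xi_1|+|\xi_2|=\tau\}$, with $\xi_2=\eta-\xi_1$. I would split $g_1g_2=\big(g_1g_2\,(|\xi_1||\xi_2|)^{1/2}\big)\,(|\xi_1||\xi_2|)^{-1/2}$ and apply Cauchy--Schwarz, so that the inexpensive factor is
\[A(\eta,\tau)=\int_{\R^d}\frac{\delta(\tau-|\xi|-|\eta-\xi|)}{|\xi|\,|\eta-\xi|}\,d\xi.\]
The purpose of the weight $|\xi_1||\xi_2|$ is that $A$ is now built from the Lorentz--invariant cone measure, hence depends on $(\eta,\tau)$ only through $\tau^2-|\eta|^2$; scaling fixes its homogeneity and evaluation at $\eta=0$, where the fibre is the sphere $|\xi|=\tau/2$, gives $A(\eta,\tau)=2^{-(d-2)}|\mathbb{S}^{d-1}|\,(\tau^2-|\eta|^2)^{(d-3)/2}$ for $\tau>|\eta|$.

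Integrating the pointwise Cauchy--Schwarz bound over $(\eta,\tau)$ yields $\|G_1*G_2\|_{L^2}^2\le\int_{\R^{d+1}}\big(\int_{\R^d}|g_1|^2|g_2|^2|\xi_1||\xi_2|\,\delta\,d\xi\big)A(\eta,\tau)\,d\eta\,d\tau$, and I would evaluate the right-hand side by returning to the $(\xi_1,\xi_2)$ variables through the coarea formula for $T(\xi_1,\xi_2)=(\xi_1+\xi_2,|\xi_1|+|\xi_2|)$. A direct computation gives $|\nabla_\xi(|\xi|+|\eta-\xi|)|=\sqrt{2(1-\cos\theta)}$ and coarea Jacobian $J_T=2^{d/2}\sqrt{1-\cos\theta}$, where $\cos\theta=\xi_1\cdot\xi_2/(|\xi_1||\xi_2|)$; together with the factor $2^{(d-1)/2}$ relating the fibre measures in $\R^d$ and $\R^{2d}$, the $(1-\cos\theta)^{1/2}$ factors cancel and one is left with
\[\|G_1*G_2\|_{L^2}^2\le 2^{-(d-1)/2}|\mathbb{S}^{d-1}|\int_{\R^{2d}}|g_1(\xi_1)|^2|g_2(\xi_2)|^2\,|\xi_1||\xi_2|\,(\tau^2-|\eta|^2)^{(d-3)/2}\,d\xi_1\,d\xi_2,\]
with $\eta=\xi_1+\xi_2$, $\tau=|\xi_1|+|\xi_2|$. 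Since $\tau^2-|\eta|^2=2|\xi_1||\xi_2|(1-\cos\theta)$, the integrand is a numerical multiple of the kernel in \eqref{e:FoschiCarneirohighd}, and collecting the powers of $2$ and $2\pi$ (in particular the Plancherel factor $(2\pi)^{-3d+1}$) produces exactly the constant $W(d,2)$, proving the inequality for every $d\ge2$.

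Since every step except Cauchy--Schwarz is an identity, equality in \eqref{e:FoschiCarneirohighd} holds precisely when Cauchy--Schwarz is an equality on almost every fibre, that is, when $h_1(\xi_1)h_2(\xi_2)$, with $h_j(\xi)=|\xi|\widehat{f_j}(\xi)$, is constant along each fibre; equivalently $h_1(\xi_1)h_2(\xi_2)$ depends on $(\xi_1,\xi_2)$ only through $\xi_1+\xi_2$ and $|\xi_1|+|\xi_2|$. After showing that a nontrivial maximiser forces the $h_j$ to be non-vanishing and that this functional equation carries enough regularity to take logarithms, I would set $\psi_j=\log h_j$ and obtain $\psi_1(\xi_1)+\psi_2(\xi_2)=\Phi(\xi_1+\xi_2,|\xi_1|+|\xi_2|)$; differentiating in $\xi_1$ and in $\xi_2$ gives $\nabla\psi_1(\xi_1)=q+p\,\xi_1/|\xi_1|$ and $\nabla\psi_2(\xi_2)=q+p\,\xi_2/|\xi_2|$ for a scalar $p$ and a vector $q$ depending on $(\eta,\tau)$. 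The main obstacle is the ensuing rigidity: one must exploit the freedom in $\xi_1,\xi_2$ to force $p$ and $q$ to be constant, after which integration gives $\psi_j(\xi)=a|\xi|+b\cdot\xi+c_j$, the asserted form, with $\Re(a)<0$ and $|\Re(b)|<-\Re(a)$ recognised as exactly the conditions making $\Re(a|\xi|+b\cdot\xi)\le(\Re(a)+|\Re(b)|)|\xi|$ decay and the right-hand side of \eqref{e:FoschiCarneirohighd} finite. This measurable-to-smooth reduction together with the rigidity step is where the restriction $d\ge3$ enters the sharpness statement.
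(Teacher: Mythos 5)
Your proof of the inequality itself is correct and is essentially the paper's argument: both pass to the space--time Fourier side, apply Cauchy--Schwarz on each fibre $\{\xi_1+\xi_2=\eta,\ |\xi_1|+|\xi_2|=\tau\}$ with the weight $|\xi_1||\xi_2|$, and evaluate the cheap factor $A(\eta,\tau)$ (the paper's $\widetilde{I_2}$) by Lorentz invariance plus homogeneity; your value $2^{-(d-2)}|\mathbb{S}^{d-1}|(\tau^2-|\eta|^2)^{(d-3)/2}$ is exactly Lemma~\ref{l:Iwave}. Two remarks on bookkeeping: the paper folds the weight $K(\eta)^{2\alpha}$, which is constant on each fibre since $2K(\eta)^2=\tau^2-|\xi|^2$ there, into the Cauchy--Schwarz so that $I_2$ is literally a constant --- equivalent to your arrangement; and your coarea computation, while its Jacobian factors do cancel to $1$ exactly as you claim, is an elaborate substitute for plain Fubini, since integrating out the two delta measures in $(\eta,\tau)$ returns the $(\xi_1,\xi_2)$-integral with no Jacobian at all. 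Also, in your displayed bound the constant should be $2^{-(d-2)}$ rather than $2^{-(d-1)/2}$; the extra $2^{(d-3)/2}$ appears only after substituting $\tau^2-|\eta|^2=2|\xi_1||\xi_2|\,(1-\cos\theta)$. The final constant $W(d,2)$ does come out correctly.

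The genuine gap is in the characterisation of maximisers. You correctly reduce equality to the functional equation $h_1(\xi_1)h_2(\xi_2)=G(|\xi_1|+|\xi_2|,\xi_1+\xi_2)$, but everything after that is deferred. Your plan needs (i) non-vanishing of the $h_j$, (ii) enough regularity not merely to take logarithms but to \emph{differentiate} --- yet the equation holds a priori only almost everywhere for measurable functions. The paper spends real effort here: it first proves local integrability of $g=|\cdot|\widehat{f}$ from finiteness of the right-hand side, then imports Foschi's continuity results (\cite{Foschi}, Lemma 7.20 and Proposition 7.5), and thereafter avoids differentiation altogether: sums of two forward light-cone points fill the solid cone (\cite{Foschi}, Lemma 7.18), turning the equation into the multiplicative Cauchy equation $H(X)H(Y)=H(X+Y)$ on the open cone, solved by \cite{Foschi}, Lemma 7.1, with $H(\tau,\xi)=\exp(a\tau+b\cdot\xi)$. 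Your ``rigidity'' step --- forcing $p$ and $q$ to be constant --- is precisely the crux, and you name it as the main obstacle without supplying an argument. Two further points are wrong or missing. First, the necessity of $|\Re(b)|<-\Re(a)$ is not ``recognised'' by inspection when $d\geq4$: there the angular weight $(1-\cos\theta)^{(d-3)/2}$ vanishes exactly along the direction in which $\exp(2\Re(b)\cdot\xi)$ fails to decay, so the bilinear integral could a priori be finite even though $\|f\|_{\dot{H}^{1/2}}=\infty$; the paper needs a separate argument (take a cap $\Omega$ about $\Re(b)'$ for $\eta_2'$ and restrict $\eta_1'$ to $\Omega^{\perp}$, so the weight is bounded below while the radial $\eta_2$-integral diverges). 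Second, your closing attribution of the restriction $d\geq3$ to the regularity/rigidity analysis is incorrect: that analysis works for all $d\geq2$ (the paper notes Foschi's Lemma 7.20 does); the true reason is that for $d=2$ the exponent $\frac{d-3}{2}=-\frac{1}{2}$ is negative, so the right-hand side of \eqref{e:FoschiCarneirohighd} is infinite for any nonzero $f_1=\lambda f_2$ --- in particular for the would-be maximisers --- and sharpness with attainment fails in the stated form.
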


In particular,  the constant is attained when
$\sqrt{-\Delta}f_1=\sqrt{-\Delta}f_2=(1+|\cdot|^2)^{-\frac{d+1}{2}}$.
In Sections ~\ref{section:Carneirowave} and \ref{characterization}
we prove a $k$-linear generalisation of Theorem \ref{fq2} with sharp
constant ${\mbox{W}}(d,k)$ for $(d,k) \neq (2,2)$.

Estimates which are similar in spirit, but with different \lq null'
weights, were proven by Klainerman and Machedon~\cite{klma, km2,
km3}, among others. They also conjectured that estimates, for
functions with separated angular Fourier supports and with the $L^2$
norm on the left-hand side replaced by an $L^p$ norm, should hold.
For the optimal range of $p$ (modulo the endpoint) this problem was
resolved in the remarkable article of Wolff \cite{wo} (see Tao
\cite{ta0} for the endpoint), following the pioneering work of
Bourgain~\cite{bo}. The $L^2$-version of the null-form conjecture
was resolved in \cite{fokl} and the $L^p$-version
 (modulo the endpoint) in~\cite{ta0, leva, lerova}.

When $d=2$, the power of the angular weight is negative, and this
estimate was implicit in the work of Barcel\'o \cite{ba}. One can
calculate that the integral on the right-hand side of
\eqref{e:FoschiCarneirohighd} is unbounded for integrable
$f_1=\lambda f_2\neq0$.

The power of the angular weight is zero when $d=3$, and in this case
the sharp inequality and characterisation of maximisers is due to
Foschi \cite{Foschi}. The sharp constant in the Strichartz estimate
\eqref{half} and the characterisation of maximisers follows from
this (see \cite{Foschi}).

In contrast with the two dimensional case, when $d\ge 4$ the
estimate \eqref{e:FoschiCarneirohighd} improves if the interacting
waves have overlapping angular Fourier support. In particular, when
$d=5$, we will see that Theorem~\ref{fq2} is stronger than the sharp
energy-Strichartz estimate, which we obtain as a consequence.

\begin{corollary}\label{thecor}
Suppose that $\partial_{tt}u=\Delta u$ on $\R^{5+1}$. Then
\begin{equation*}\label{theest}
\|u\|_{L^{4}(\R^{5+1})} \leq \frac{1}{(8\pi)^{1/2}}\, \Big(\|\nabla
u(0)\|^2_{L^2(\mathbb{R}^5)}+\|\partial_t
u(0)\|_{L^2(\mathbb{R}^5)}^2\Big)^{1/2}.
\end{equation*}
The constant is sharp and is attained  if and only if
\begin{equation}\label{ref}\big(u(0),\partial_tu(0)\big)=\Big(0,(1+|\cdot|^2)^{-\frac{d+1}{2}}\Big),\end{equation}
with $d=5$, modulo the action of the group generated by
\begin{itemize}
\item[(W1)] $u(t,x)\to u(t+t_0,x+x_0)$ with $t_0\in\R$, $x_0\in\R^d$,
\item[(W2)] $u(t,x)\to \lambda_1 u(\lambda_2 t,\lambda_2 x)$ with $\lambda_1,\lambda_2
>0$,
\item[(W3)] $u(t,x)\to e^{i\theta_+} u_+(t,x)+e^{i\theta_-} u_-(t,x)$ with $\theta_+,\theta_-
\in\R$.
\end{itemize}
\end{corollary}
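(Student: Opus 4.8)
The plan is to deduce everything from Theorem \ref{fq2} in the special dimension $d=5$, where $\tfrac{d-1}{2}=2$ and $\tfrac{d-3}{2}=1$, so the angular weight collapses to the \emph{linear} factor $1-\xi_1\cdot\xi_2/(|\xi_1||\xi_2|)$; this linearity is what lets the weighted integral separate. Writing $u=u_++u_-$ with $u_\pm=e^{\pm it\sqrt{-\Delta}}f_\pm$, I would first record the energy identity. Since $u(0)=f_++f_-$ and $\partial_tu(0)=i\sqrt{-\Delta}(f_+-f_-)$, the parallelogram law gives
\[ \|\nabla u(0)\|_{L^2}^2+\|\partial_tu(0)\|_{L^2}^2=2\big(\|\nabla f_+\|_{L^2}^2+\|\nabla f_-\|_{L^2}^2\big), \]
so the right-hand side of the Corollary decouples into the two pieces $\mathcal{E}_\pm:=\int|\widehat{f_\pm}|^2|\xi|^2\,d\xi=(2\pi)^5\|\nabla f_\pm\|_{L^2}^2$.

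For the left-hand side, since $|u^2|=|u|^2$ pointwise one has $\|u\|_{L^4}^2=\|u^2\|_{L^2}$, and $u^2=u_+^2+2u_+u_-+u_-^2$. The crucial observation is that the three spacetime Fourier transforms are supported on essentially disjoint sets: $\widehat{u_\pm^2}$ is carried by the solid forward and backward cones $\{\pm\tau\ge|\eta|\}$, while $\widehat{u_+u_-}$ is carried by the spacelike region $\{|\tau|\le|\eta|\}$, the three overlapping only on the null cone, a set of measure zero. Plancherel therefore yields the \emph{orthogonal} identity
\[ \|u\|_{L^4}^4=\|u_+^2\|_{L^2}^2+4\|u_+u_-\|_{L^2}^2+\|u_-^2\|_{L^2}^2. \]
(Using only the triangle inequality here, or a mixed-sign bilinear estimate for the cross term, would be too lossy to reach the sharp constant, so this orthogonality is essential.)

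The key analytic step is then to control the cross term by Cauchy--Schwarz back into the diagonal terms, $\|u_+u_-\|_{L^2}^2\le\|u_+^2\|_{L^2}\|u_-^2\|_{L^2}$; this is what preserves sharpness, since it forces the cross term to vanish precisely when the diagonal terms do (e.g.\ for angularly concentrated data). Writing $P=\|u_+^2\|_{L^2}$ and $Q=\|u_-^2\|_{L^2}$, Theorem \ref{fq2} with $d=5$ and $f_1=f_2=f_\pm$ gives $P^2\le\mathrm{W}(5,2)(\mathcal{E}_+^2-|A_+|^2)\le\mathrm{W}(5,2)\,\mathcal{E}_+^2$, and likewise for $Q$, where $A_\pm:=\int|\widehat{f_\pm}|^2|\xi|\xi\,d\xi$ and I have used the $d=5$ separation $\int|\widehat{f_\pm}|^2|\widehat{f_\pm}|^2|\xi_1|^2|\xi_2|^2(1-\tfrac{\xi_1\cdot\xi_2}{|\xi_1||\xi_2|})=\mathcal{E}_\pm^2-|A_\pm|^2$. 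Since $P^2+4PQ+Q^2$ is increasing in $P,Q\ge0$,
\[ \|u\|_{L^4}^4\le P^2+4PQ+Q^2\le\mathrm{W}(5,2)\big(\mathcal{E}_+^2+4\mathcal{E}_+\mathcal{E}_-+\mathcal{E}_-^2\big)\le\tfrac32\,\mathrm{W}(5,2)(\mathcal{E}_++\mathcal{E}_-)^2, \]
the last inequality being $\tfrac12(\mathcal{E}_+-\mathcal{E}_-)^2\ge0$. Inserting $\mathrm{W}(5,2)=\tfrac14(2\pi)^{-14}|\mathbb{S}^4|$ and the energy identity converts the right-hand side into exactly $(8\pi)^{-2}\big(\|\nabla u(0)\|_{L^2}^2+\|\partial_tu(0)\|_{L^2}^2\big)^2$, which is the claimed estimate.

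For the characterisation I would trace equality through each link. The final step forces $\mathcal{E}_+=\mathcal{E}_-$; the bilinear step forces $A_\pm=0$ together with equality in Theorem \ref{fq2}, so $|\xi|\widehat{f_\pm}(\xi)=\exp(a_\pm|\xi|+b_\pm\cdot\xi+c_\pm)$ with $\Re b_\pm=0$ (this is exactly $A_\pm=0$, which re-centres the profile at the origin and makes $|\widehat{f_\pm}|$ radial); and Cauchy--Schwarz forces $|u_+|^2$ and $|u_-|^2$ to be proportional. Normalising by (W1) ($\Im a_\pm,\Im b_\pm$, hence $b_\pm=0$), by (W2) ($\Re a_\pm$ and the amplitude) and by (W3) (the phases) collapses these to $\widehat{f_\pm}(\xi)\propto|\xi|^{-1}e^{-|\xi|}$, whose inverse transform gives $\sqrt{-\Delta}f_\pm\propto(1+|\cdot|^2)^{-3}=(1+|\cdot|^2)^{-(d+1)/2}$ with $d=5$; the proportionality $|u_+|^2\propto|u_-|^2$ then pins down $f_+=-f_-$, i.e.\ $u(0)=0$, which is \eqref{ref}. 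The hard part will be this last step: checking that the three equality conditions are mutually compatible and are exhausted precisely by the orbit of \eqref{ref} under (W1)--(W3). The subtle point is the Cauchy--Schwarz condition $|u_+|^2\propto|u_-|^2$, which I would verify is consistent with the radial exponential extremisers because the normalised profile $f_+$ is real and even, whence $u_-=-\overline{u_+}$ and so $|u_-|=|u_+|$.
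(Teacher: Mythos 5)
Your proof of the inequality itself is correct and is essentially the paper's argument: the orthogonal splitting $\|u\|_{L^4}^4=\|u_+^2\|_{L^2}^2+4\|u_+u_-\|_{L^2}^2+\|u_-^2\|_{L^2}^2$ from the disjoint space-time Fourier supports, Cauchy--Schwarz on the cross term, the sharp one-sided bound (Theorem \ref{fq2} with $f_1=f_2$ together with discarding the positive term $|A_\pm|^2$, which is exactly how the paper proves Corollary \ref{seven} via Carneiro's observation), the elementary inequality $2(X^2+Y^2+4XY)\le 3(X+Y)^2$ with equality iff $X=Y$, and the parallelogram law; your constant bookkeeping checks out, giving $(8\pi)^{-1/2}$.

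The characterisation of maximisers, however, has a genuine gap precisely at the point you flag as ``the hard part''. The equality conditions yield $|\xi|\widehat{f_\pm}(\xi)=\exp(a_\pm|\xi|+ib_\pm\cdot\xi+c_\pm)$ with $b_\pm\in\R^5$, together with $|u_+|=|u_-|$ almost everywhere (your $X=Y$ forces the proportionality constant to be $1$). You then propose to ``normalise by (W1) ($\mathrm{Im}\,a_\pm$, $\mathrm{Im}\,b_\pm$, hence $b_\pm=0$)'', but (W1) acts \emph{jointly} on the pair: $\widehat{f_\pm}\mapsto\exp(\pm it_0|\xi|+ix_0\cdot\xi)\,\widehat{f_\pm}$, so a time translation shifts $\mathrm{Im}\,a_+$ by $t_0$ and $\mathrm{Im}\,a_-$ by $-t_0$, while a spatial translation shifts $b_+$ and $b_-$ by the \emph{same} $x_0$, and (W2) rescales both profiles simultaneously. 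Hence you can normalise both halves at once only if the relations $a_+=\overline{a_-}$, $b_+=b_-$ and $\Re(c_+)=\Re(c_-)$ already hold --- and extracting exactly these relations from the pointwise condition $|u_+|=|u_-|$ is the missing step; nothing in your proposal derives them. Your closing remark only verifies the converse (that the radial real profile with $f_-=-f_+$ gives $u_-=-\overline{u_+}$, hence $|u_-|=|u_+|$), which establishes sharpness but not exhaustiveness; note also that equality of moduli can never ``pin down $f_+=-f_-$'', since the relative phase is free and is absorbed by (W3). The paper closes this gap with a rigidity lemma: writing $|u_+|=\Lambda_{a_+,b_+,c_+}$ and $|u_-|=\Lambda_{\overline{a_-},b_-,\overline{c_-}}$, where $\Lambda_{a,b,c}$ is the modulus of the explicit one-sided evolution, it shows that $\Lambda_{a,b,c}$ determines $a$, $b$ and $\Re(c)$: its maximum is attained exactly at $(t,x)=(-\mathrm{Im}(a),-b)$, pinning $\mathrm{Im}(a)$ and $b$, and along that axis $\Lambda_{a,b,c}(t-\mathrm{Im}(a),-b)^{-1}$ is, up to an absolute constant, the polynomial $\exp(-\Re(c))(\Re(a)^2+t^2)^2$ in $t$, whose leading coefficient and constant term recover $\Re(c)$ and then $\Re(a)$. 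You need this argument, or some substitute for it, before the orbit description under (W1)--(W3) can be asserted.
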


Being a corollary of Theorem~\ref{fq2}, the proof relies heavily on
the Fourier transform, however the Fourier transform makes no
appearance in the final inequality. Indeed the wave equation is
often considered as a real equation, and it would be interesting to
know if \eqref{theest} could be proven without the use of the
complex numbers.

By the conservation of energy, if the initial data is a maximising
pair, then $(u(t),\partial_t u(t))$ must also be a maximising pair.
Thus the evolution of the initial data \eqref{ref} can be described
in terms (W1),(W2) and (W3).

 Other well-known symmetries for the wave equation are
spatial rotations and Lorentzian boosts:
\begin{itemize}
\item[(W4)] $u(t,x)\to u(t,Rx)$ with
$R\in \mbox{SO}(d),$
\item[(W5)] $u(t,x)\to u(\cosh(a)t+\sinh(a)x_1, \cosh(a)x_1+\sinh(a)t,x')$ with
$a\in\R$.
\end{itemize}
In \cite[Theorem 1.7]{Foschi}, it is shown that the maximisers for
\eqref{half} with $d=3$ can be obtained from the action of the group
generated by (W1)--(W5) on the pair $\eqref{ref}$. Thus, the class
of maximisers is larger than that of Corollary \ref{thecor}. This is
explained by the fact that
$$
\Big(\|u(0)\|^2_{\dot{H}^{\frac{1}{2}}(\mathbb{R}^d)}+\|\partial_t
u(0)\|^2_{\dot{H}^{-\frac{1}{2}}(\mathbb{R}^d)}\Big)^{1/2}
$$
is invariant under (W5) whereas the energy is not.


We now dedicate some words to the recent history of the problem and
the structure of the article. In order to do so, we will need to
discuss the closely related Schr\"odinger evolution operator
$e^{it\Delta}$ given by
$$
e^{it\Delta}f(x) =\frac{1}{(2\pi)^d}\int_{\R^d}
\widehat{f}(\xi) \exp\big(i(x\cdot\xi-t|\xi|^2)\big) \,d\xi.
$$
Analogous to \eqref{half}, Strichartz \cite{st} proved that
\begin{equation}\label{stricschro}
\big\| e^{it\Delta} f \big\|_{L^p_{t,x}(\R^{d+1})} \leq C\,
\|f\|_{L^2(\R^d)},\quad p = \frac{2(d+2)}{d},
\end{equation}
which followed work of Stein and Tomas on the Fourier extension
problem on the unit sphere $\mathbb{S}^{d-1}$.

Some decades later, Kunze \cite{ku}  proved the existence of
maximisers for \eqref{stricschro} with $d=1$, and Foschi
\cite{Foschi} found the maximisers when $d=1,2$. This was reproved
via different techniques by Hundertmark--Zharnitsky~\cite{huza} (see
also Bennett \textit{et al} \cite{bebecahu} for an alternative
derivation of the sharp constant when $d=1,2$ using heat-flow
methods).
 Carneiro~\cite{Carneiro} then
developed the ideas of Hundertmark--Zharnitsky in order to prove
analogous results to our forthcoming Theorem \ref{t:FoschiCarneiro}
for the Schr\"odinger operator. That maximisers for
\eqref{stricschro} exist in higher dimensions is due to Shao
\cite{shao} (see also \cite{meve}, \cite{cake}, \cite{beva}).

More recently, Duyckaerts, Merle and Roudenko \cite{dumero} proved
that the
 $L^p_{t,x}(\mathbb{R}^{d+1})$ norm, with $p = \frac{2(d+2)}{d}$, of the solution to the $L^2$-critical nonlinear Schr\"odinger equation is maximised over data with fixed (small) $L^2(\mathbb{R}^d)$  norm.
For $d=1,2$, they used the result of Foschi to calculate the size of
the maximum norm with some precision. In particular, they showed
that the Strichartz norm for the focussing equation with small data
is larger than in the linear case. They remark that parts of their
proof should be flexible enough to treat the energy-critical
Schr\"odinger and wave equations, and Corollary ~\ref{thecor} is a
step in that direction.

Finally, Christ and Shao~\cite{chsh, chsh2} proved the existence of
maximisers for the original Stein--Tomas extension inequality on the
two-dimensional sphere, and that the  maximisers $f$ are necessarily
smooth and satisfy $|f(x)|=|f(-x)|$. For general compact surfaces
and dimensions, Fanelli, Vega and Visciglia \cite{favevi} obtained
the existence of maximisers for the associated extension
inequalities up to the endpoint (at which it is also shown that
existence is not guaranteed in general). In particular, the result
in \cite{favevi} holds for $p>\frac{2(d+2)}{d}$ when extending on
$\mathbb{S}^{d-1}$.

 In Section~\ref{wave1}, we state our
results for the wave propagator in multilinear form and prove
Corollary~\ref{thecor} and some further corollaries for $d=2,3$. In
Section~\ref{section:Carneirowave}, we prove the sharp multilinear
inequality, and  we characterise the maximisers for this inequality
in Section~\ref{characterization}. Finally, in
Section~\ref{section:Carneiro}, we revisit the result of Carneiro
\cite{Carneiro} for the Schr\"odinger evolution operator in order to
make a number of remarks and to provide an alternative proof
following Foschi \cite{Foschi}.

\section{Main results}\label{wave1}

We state our result in full generality (in terms of the
multilinearity). In order to write down an expression for the sharp
constant ${\mbox{W} }(d,k)$ we need the beta function $\mbox{B}$
given by
$$
\mbox{B}(x,y) = \int_0^1 s^{x-1} (1-s)^{y-1}\,ds
$$
for $x,\,y > 0$.

\begin{theorem} \label{t:FoschiCarneiro}
Suppose that $d, k\ge 2$ and let $ \alpha(k) =
\frac{(d-1)(k-1)}{2}-1$. Let $K : (\mathbb{R}^d)^k \rightarrow
[0,\infty)$ be given by
$$
K(\eta) = \bigg( \sum_{1 \leq i<j \leq k} (|\eta_i||\eta_j| - \eta_i \cdot
\eta_j) \bigg)^{1/2}
$$
for $\eta = (\eta_1,\ldots,\eta_k) \in (\mathbb{R}^d)^k$. Then the
inequality
\begin{equation} \label{e:FoschiCarneiro}
\Big\| \prod_{j=1}^k e^{it\sqrt{-\Delta}}f_j
\Big\|_{L^{2}_{t,x}(\mathbb{R}^{d+1})}^{2} \leq {\emph{W} }(d,k) \int_{\R^{kd}}
\prod_{j=1}^k|\widehat{f}_j(\eta_j)|^2|\eta_j| \,K(\eta)^{2\alpha(k)}
\,d\eta
\end{equation}
holds with constant given by
$$
{\emph{W} }(d,k) = 2^{-\frac{d-1}{2}}(2\pi)^{-3d+1}|\mathbb{S}^{d-1}|
$$
if $k=2$, and
$$
{\emph{W} }(d,k) = 2^{-\frac{(d-1)(k-1)}{2}} (2\pi)^{-d(2k-1)+1} |\mathbb{S}^{d-1}|^{k-1} \prod_{j=2}^{k-1} \emph{B}\big(d-1,\alpha(j) + 1\big)
$$
if $k \geq 3$. Whenever $(d,k) \neq (2,2)$ the constant ${\emph{W}
}(d,k)$ is sharp and is attained if and only if
\begin{equation*}|\xi|\widehat{f_j}(\xi)=\exp(a|\xi|+b\cdot\xi+c_j),\end{equation*}
where $a,c_1,\ldots,c_k\in \C$, $b\in\C^d$, $\Re(a)<0$ and
$|\Re(b)|<-\Re(a)$.
\end{theorem}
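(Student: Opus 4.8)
The plan is to prove the multilinear inequality \eqref{e:FoschiCarneiro} by computing the space-time $L^2$ norm on the left-hand side explicitly via the Fourier transform, reducing matters to a convolution of measures on the cone, and then extracting a pointwise bound via a Cauchy--Schwarz argument whose equality case will pin down the maximisers. I will then compute the resulting constant in closed form, which is where the beta-function factors appear.

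\textbf{Setting up the Plancherel reduction.} First I would write $F_t(x) = \prod_{j=1}^k e^{it\sqrt{-\Delta}}f_j(x)$ and apply Plancherel in the space-time variables $(t,x)$. The square of the product $\prod_j e^{it\sqrt{-\Delta}}f_j$ has space-time Fourier transform given by a convolution of the measures $\widehat{f}_j\,d\sigma_j$ carried on the forward light cone $\{(|\xi|,\xi):\xi\in\R^d\}$. Concretely, the left-hand side equals a constant times
\begin{equation*}
\int_{\R^{d+1}} \bigg| \int_{(\R^d)^k} \prod_{j=1}^k \widehat{f}_j(\eta_j)\,\delta\Big(\tau - \sum_j |\eta_j|\Big)\,\delta\Big(x^* - \sum_j \eta_j\Big)\,d\eta \bigg|^2 \, d\tau\,dx^*,
\end{equation*}
so that the problem becomes one of estimating an $L^2$ norm of a convolution. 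Expanding the square and integrating out the delta functions then produces the four-fold structure typical of these sharp-constant computations: one obtains a double integral over two $k$-tuples $\eta$ and $\zeta$ constrained to have the same total frequency $\sum \eta_j = \sum \zeta_j$ and the same total energy $\sum |\eta_j| = \sum |\zeta_j|$.

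\textbf{The key pointwise bound and the equality case.} The heart of the argument is to bound this double integral by its diagonal. I would apply Cauchy--Schwarz to the $\eta,\zeta$ integral in the form that separates $\prod_j \widehat{f}_j(\eta_j)\overline{\widehat{f}_j(\zeta_j)}$ and symmetrises, yielding an upper bound in which each variable appears in a product $\prod_j |\widehat{f}_j(\eta_j)|^2$ integrated against a purely geometric kernel supported on the constraint set. This collapses the bilinear (in $\eta,\zeta$) expression to a single integral $\int \prod_j |\widehat{f}_j(\eta_j)|^2 \, \Phi(\eta)\, d\eta$, where $\Phi(\eta)$ is the ``self-interaction'' weight obtained by integrating the geometric kernel over all $\zeta$ sharing the same momentum and energy as $\eta$. \textbf{The main obstacle} I expect is evaluating $\Phi(\eta)$ in closed form and showing it equals $\mbox{W}(d,k)\,|\eta_1|\cdots|\eta_k|\,K(\eta)^{2\alpha(k)}$: this is a computation of the volume of the set of cone points with prescribed total energy and momentum, which requires carefully parametrising the constraint $\sum\zeta_j = \sum\eta_j$, $\sum|\zeta_j| = \sum|\eta_j|$ and integrating out the $k-2$ remaining angular and radial degrees of freedom. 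The iterated beta-function product $\prod_{j=2}^{k-1}\mbox{B}(d-1,\alpha(j)+1)$ strongly suggests that this integral should be computed inductively in $k$, peeling off one frequency variable at a time and recognising each radial integral as a beta integral; the exponent $\alpha(k) = \frac{(d-1)(k-1)}{2}-1$ is exactly what one expects from stacking $k-1$ such factors.

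\textbf{Sharpness and the characterisation of maximisers.} Equality in the Cauchy--Schwarz step forces the integrand to be supported, up to phase, on the diagonal structure; tracing this back gives a multiplicative functional equation for $g_j(\xi):=|\xi|\widehat{f}_j(\xi)$ of the form $g_i(\eta_i)g_j(\eta_j) = g_i(\zeta_i)g_j(\zeta_j)$ whenever $\eta_i+\eta_j = \zeta_i+\zeta_j$ and $|\eta_i|+|\eta_j| = |\zeta_i|+|\zeta_j|$. The only solutions to such an exponential-type Cauchy equation on the cone are $g_j(\xi) = \exp(a|\xi| + b\cdot\xi + c_j)$, with the conditions $\Re(a)<0$ and $|\Re(b)|<-\Re(a)$ being exactly those guaranteeing that the Gaussian-type weight $\exp(2\Re(a)|\xi| + 2\Re(b)\cdot\xi)$ is integrable against $|\xi|\,d\xi$ on $\R^d$. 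Resolving this functional equation rigorously (rather than formally) is the delicate second half of the proof, and I would defer it to the separate characterisation section promised in the excerpt. The case $(d,k)=(2,2)$ is excluded from sharpness because there the weight $K(\eta)^{2\alpha(k)}$ carries a negative power, and the right-hand side is then infinite for nonzero integrable data, as already noted.
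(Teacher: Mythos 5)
Your plan is, in substance, the paper's own proof. The paper writes $u_k=\prod_{j=1}^k e^{it\sqrt{-\Delta}}f_j$, passes to the space-time Fourier side exactly as you do, and applies Cauchy--Schwarz at each fixed $(\tau,\xi)$ with the splitting $K(\eta)^{\alpha(k)}\prod_j|\eta_j|^{1/2}$ against its reciprocal; your symmetrised double-integral formulation in the variables $\eta,\zeta$ (the Hundertmark--Zharnitsky/Carneiro organisation) is an equivalent rewriting of that step, because on the constraint set one has the identity $2K(\eta)^2=\big(\sum_j|\eta_j|\big)^2-\big|\sum_j\eta_j\big|^2=\tau^2-|\xi|^2$, so the weight is constant on each leaf and your self-interaction kernel $\Phi(\eta)$ factors as $\prod_j|\eta_j|\,K(\eta)^{2\alpha(k)}$ times the leaf integral $I_k(\tau,\xi)$, which must then be shown to be an absolute constant. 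You correctly predict the inductive, beta-integral evaluation of this leaf integral; the one device the paper uses that your ``careful parametrisation'' sketch leaves implicit is Lorentz invariance of the cone measure $|\xi|^{-1}\delta(\tau-|\xi|)$ combined with homogeneity, which yields $\widetilde{I_k}(\tau,\xi)=(\tau^2-|\xi|^2)^{\alpha(k)}\widetilde{I_k}(1,0)$ and collapses each step of the induction to the one-dimensional integral $\int_0^{1/2}(1-2r)^{\alpha(j)}r^{d-2}\,dr$; a direct parametrisation of the constraint set would be substantially heavier. Your route to the functional equation for $g_j=|\cdot|\widehat{f}_j$, the deferral of its rigorous resolution (the paper does this via Foschi's continuity and cone lemmas, after first forcing $g_j=\lambda g_\ell$ by symmetry), and your explanation of the excluded case $(d,k)=(2,2)$ all match the paper.

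One assertion in your plan is wrong as stated: that $\Re(a)<0$ and $|\Re(b)|<-\Re(a)$ are \emph{exactly} the conditions making $\exp(2\Re(a)|\xi|+2\Re(b)\cdot\xi)$ integrable against $|\xi|\,d\xi$, and that this settles admissibility. The conditions are correct, but this one-variable equivalence does not prove necessity when $k=2$ and $d\geq 4$: there the right-hand side of \eqref{e:FoschiCarneiro} carries the angular factor $(1-\eta_1'\cdot\eta_2')^{(d-3)/2}$ (with $\eta_j'=|\eta_j|^{-1}\eta_j$), which vanishes precisely along the direction $\Re(b)'$ where the exponential weight fails to decay, so the bilinear integral could a priori be finite even though $\|f\|_{\dot{H}^{1/2}(\R^d)}=\infty$. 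The paper closes this by a separate argument: it restricts $\eta_1'$ to directions orthogonal to $\Re(b)$ and $\eta_2'$ to a closed cap $\Omega$ about $\Re(b)'$, on which $1-\eta_1'\cdot\eta_2'$ is bounded below, and shows the resulting radial integral diverges whenever $|\Re(b)|\geq-\Re(a)$. (For $k\geq 3$ the reduction to divergence of $\|f\|_{\dot{H}^{1/2}(\R^d)}$ is legitimate and matches your claim.) Apart from this localized point, your outline would carry through along the same lines as the paper.
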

Theorem \ref{t:FoschiCarneiro} was proven by Foschi in the cases
where $\alpha(k)=0$. This occurs if and only if $(d,k)$ is either
 $(2,3)$ or $(3,2)$ and yields the sharp Strichartz estimates for the one-sided operator when $d=2,3$ and a characterisation of the maximisers.

 The cases where
$\alpha(k)=1$ are also special and this occurs if and only if
$(d,k)$ is
 $(2,5)$, $(3,3)$ or $(5,2)$. We employ a basic yet very useful
observation of Carneiro \cite{Carneiro} to deduce the following
estimates.

\begin{corollary}\label{seven} In two spatial dimensions,
$$
\big\| e^{it\sqrt{-\Delta}} f \big\|_{L^{10}_{t,x}(\R^{2+1})} \leq
\Big(\frac{5}{12\pi^3}\Big)^{1/10} \|f\|_{\dot{H}^{\frac{1}{2}}(\R^2)}^{3/5}
\|f\|_{\dot{H}^1(\R^2)}^{2/5}.
$$
In three spatial dimensions,
$$
\big\| e^{it\sqrt{-\Delta}} f \big\|_{L^{6}_{t,x}(\R^{3+1})} \leq
\Big(\frac{3}{16\pi^3}\Big)^{1/6} \|f\|_{\dot{H}^{\frac{1}{2}}(\R^3)}^{1/3}
\|f\|_{\dot{H}^1(\R^3)}^{2/3}.
$$
In five spatial dimensions,
$$
\big\| e^{it\sqrt{-\Delta}} f \big\|_{L^{4}_{t,x}(\R^{5+1})} \leq
\Big(\frac{1}{24\pi^{2}}\Big)^{1/4}  \|f\|_{\dot{H}^1(\R^5)}.
$$
The constants are sharp and are attained if and only if
$$|\xi|\widehat{f}(\xi) = \exp({a|\xi|+ib\cdot\xi+c}),$$ where
$a,c\in\C$, $b\in\R^d$ and $\Re (a) <0$.
\end{corollary}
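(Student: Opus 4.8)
The plan is to deduce all three estimates simultaneously from the diagonal case of Theorem~\ref{t:FoschiCarneiro}, exploiting that each listed pair $(d,k)\in\{(2,5),(3,3),(5,2)\}$ is exactly a case with $\alpha(k)=1$. First I would set $f_1=\cdots=f_k=f$ in \eqref{e:FoschiCarneiro}, so that the left-hand side becomes $\|e^{it\sqrt{-\Delta}}f\|_{L^{2k}_{t,x}(\R^{d+1})}^{2k}$ and the right-hand side becomes $\mathrm{W}(d,k)\int_{\R^{kd}}\prod_{j}g(\eta_j)\,K(\eta)^{2}\,d\eta$, where I abbreviate $g(\eta)=|\widehat f(\eta)|^2|\eta|\ge 0$ and use $\alpha(k)=1$ to write $K(\eta)^{2}=\sum_{1\le i<j\le k}(|\eta_i||\eta_j|-\eta_i\cdot\eta_j)$.

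The key step is Carneiro's observation that, since the integrand is now symmetric in $\eta_1,\dots,\eta_k$, each of the $\binom{k}{2}$ pairwise terms contributes equally and the integral factorises. Writing $V=\int_{\R^d}g(\eta)\,\eta\,d\eta\in\R^d$, one obtains
\[
\int_{\R^{kd}}\prod_{j}g(\eta_j)\,K(\eta)^{2}\,d\eta=\binom{k}{2}\Big[\Big(\int g(\eta)|\eta|\,d\eta\Big)^{2}-|V|^{2}\Big]\Big(\int g(\eta)\,d\eta\Big)^{k-2}.
\]
Because $g\ge 0$ forces $|V|^{2}\ge 0$, discarding this term yields the upper bound $\binom{k}{2}\big(\int g|\eta|\big)^{2}\big(\int g\big)^{k-2}$. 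By Plancherel, $\int g(\eta)|\eta|\,d\eta=(2\pi)^d\|f\|_{\dot H^1(\R^d)}^2$ and $\int g(\eta)\,d\eta=(2\pi)^d\|f\|_{\dot H^{1/2}(\R^d)}^2$, so taking $2k$-th roots gives
\[
\big\|e^{it\sqrt{-\Delta}}f\big\|_{L^{2k}_{t,x}(\R^{d+1})}\le\Big(\mathrm{W}(d,k)\tbinom{k}{2}(2\pi)^{dk}\Big)^{1/2k}\|f\|_{\dot H^1}^{2/k}\|f\|_{\dot H^{1/2}}^{(k-2)/k}.
\]
Specialising to the three pairs and inserting the values of $\mathrm{W}(d,k)$ reproduces the stated constants after a routine computation; for instance, when $(d,k)=(5,2)$ the factor $\dot H^{1/2}$ drops out and $\mathrm{W}(5,2)(2\pi)^{10}=2^{-2}(2\pi)^{-4}|\mathbb S^{4}|=\tfrac{1}{24\pi^{2}}$, using $|\mathbb S^{4}|=\tfrac{8\pi^{2}}{3}$, which gives the five-dimensional constant $(\tfrac{1}{24\pi^{2}})^{1/4}$; the cases $(3,3)$ and $(2,5)$ follow identically, now retaining the beta factors from the $k\ge 3$ formula for $\mathrm{W}(d,k)$.

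For the characterisation I would track equality through both inequalities, which hold in the same direction. Equality in the diagonal case of Theorem~\ref{t:FoschiCarneiro} forces $|\xi|\widehat f(\xi)=\exp(a|\xi|+b\cdot\xi+c)$ with $\Re(a)<0$ and $|\Re(b)|<-\Re(a)$, while equality in the step discarding $|V|^{2}$ requires $V=0$. For such an $f$ one computes $g(\xi)=|\xi|^{-1}\exp\big(2\Re(a)|\xi|+2\Re(b)\cdot\xi+2\Re(c)\big)$, so $V=0$ holds precisely when $g$ is even, that is when $\Re(b)=0$; writing the purely imaginary exponent as $ib\cdot\xi$ with $b\in\R^d$ gives the advertised family and renders the constraint $|\Re(b)|<-\Re(a)$ vacuous. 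The point requiring the most care is exactly this simultaneous sharpness: one must verify that the maximiser family of Theorem~\ref{t:FoschiCarneiro} is rich enough to contain functions with vanishing first moment $V$, so that a single $f$ saturates both inequalities. The even profiles $\Re(b)=0$ supply precisely these, which is why the bound remains sharp despite discarding the cross term.
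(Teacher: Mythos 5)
Your proposal is correct and takes essentially the same route as the paper: set all $f_j=f$, use $\alpha(k)=1$ to split the weight into the product term minus the cross term, recognise the cross term as $\binom{k}{2}\,|V|^2\big(\int g\big)^{k-2}\ge 0$ via Carneiro's observation \eqref{theline}, discard it, and run the same two-step equality analysis, with the same constant computations. The only point you state a bit loosely is that $V=0$ ``precisely when $g$ is even'': what must actually be verified is $\Re(b)\neq 0\Rightarrow V\neq 0$ for the exponential family, which the paper does by rotating $\Re(b)$ onto the first coordinate axis and splitting the $\xi_1$-integral over half-spaces so that the factor $\big(\exp(2b_1\xi_1)-\exp(-2b_1\xi_1)\big)\xi_1>0$ makes $V_1>0$ manifest.
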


\begin{proof}[Proof of Corollary \ref{seven}]
We have that $\alpha(k)=1$. Taking $f_1=\cdots =f_k=f$, the integral
on the right-hand side of \eqref{e:FoschiCarneiro} can be written as
$$
\int_{\R^{kd}}\prod_{\ell=1}^k|\widehat{f}(\eta_\ell)|^2|\eta_\ell|
K(\eta)^{2\alpha(k)}\,d\eta = \mbox{I} - \mbox{II},
$$
where
$$
\mbox{I} = \sum_{1 \leq i<j \leq k} \int_{\R^{kd}}
\prod_{\ell=1}^k|\widehat{f}(\eta_\ell)|^2|\eta_\ell| |\eta_i|
|\eta_j| \,d\eta =
\frac{k(k-1)}{2}(2\pi)^{kd}\,\|f\|_{\dot{H}^{\frac{1}{2}}(\mathbb{R}^d)}^{2(k-2)}
\|f\|_{\dot{H}^1(\mathbb{R}^d)}^4,
$$
and
$$
\mbox{II} = \sum_{1 \leq i<j \leq k} \int_{\R^{kd}}
\prod_{\ell=1}^k|\widehat{f}(\eta_\ell)|^2|\eta_\ell|\, \eta_i \cdot
\eta_j \,d\eta.
$$
As in \cite{Carneiro}, by writing $\xi=(\xi_1,\ldots,\xi_d)$, we
have
\begin{equation}\label{theline}
\int_{\R^{2d}} |\widehat{f}(\eta_i)|^2|\eta_i|
|\widehat{f}(\eta_j)|^2|\eta_j| \, \eta_i \cdot \eta_j
\,d\eta_id\eta_j= \sum_{m=1}^d \Big(\int_{\R^{d}}
|\widehat{f}(\xi)|^2|\xi|\, \xi_m  \,d\xi\Big)^2\ge 0,
\end{equation}
so that
\begin{align*}
\big\| e^{it\sqrt{-\Delta}} f \big\|^{2k}_{L^{2k}_{t,x}(\mathbb{R}^{d+1})} & \leq
{\mbox{W} }(d,k)\,(\mbox{I}-\mbox{II}) \\
& \leq \Big[\frac{k(k-1)}{2}(2\pi)^{kd}{\mbox{W} }(d,k)\Big]\,
\|f\|_{\dot{H}^{\frac{1}{2}}(\mathbb{R}^d)}^{2(k-2)} \|f\|_{\dot{H}^1(\mathbb{R}^d)}^{4}
\end{align*}
with equality at each inequality for the functions with radial
modulus given by
$$\widehat{f}(\xi) = |\xi|^{-1}\exp({a|\xi|+b\cdot\xi+c}),$$ where
$a,c\in\C$, $b\in\C^d$ and $\Re(b)=0$.

It remains to characterise the maximisers. That is to say, to prove
that when $b \in \C^d$ with $\Re (b) \neq 0$, the quantity
$\mbox{II}$ is nonzero. By a rotation we can suppose that
$$\widehat{f}(\xi) = |\xi|^{-1}\exp({a|\xi|+b\cdot\xi+c}),$$ where
$a,c\in\C$, $b\in\C^d$, $\Re (a) <0$, $\Re(b)=(b_1,0,\ldots,0)$ and
$b_1>0$. By \eqref{theline}, it suffices to prove that
$$
\Big(\int_{\R^{d}} |\widehat{f}(\xi)|^2|\xi|\, \xi_1  \,d\xi\Big)^2>0,
$$
which is the same thing as proving
$$
\Big(\int_{\R^{d}}
|\xi|^{-1}\exp\Big(2\Re(a)|\xi|+2b_1\xi_1+2\Re(c)\Big) \xi_1
\,d\xi\Big)^2>0.
$$
Writing $\R^d_+=\{\,\xi\in\R^d \,:\, \xi_1\ge0\,\}$, the left-hand
side of this inequality is equal to
$$
\Big(\int_{\R^{d}_+}
|\xi|^{-1}\exp\Big(2\Re(a)|\xi|+2\Re(c)\Big)\Big(\exp(2b_1\xi_1)-\exp(-2b_1\xi_1)\Big)\xi_1
\,d\xi\Big)^2,
$$
which is positive, and so we are done.
\end{proof}

We conclude this section by showing how Corollary \ref{thecor} can
be deduced from Corollary \ref{seven} following Foschi
\cite{Foschi}.

\textit{Proof of Corollary~\ref{thecor}.} By Corollary~\ref{seven},
we have the sharp inequality
\begin{equation}\label{zero0}
\|u_+\|_{L^4_{t,x}(\mathbb{R}^{5+1})} \leq (24\pi^{2})^{-1/4}\|\nabla f_+\|_{L^2(\mathbb{R}^5)}
\end{equation}
with equality if $|\xi|\widehat{f_+}(\xi) = \exp(-|\xi|)$, and by
the same argument we also have
\begin{equation}\label{zero}
\|u_-\|_{L^4_{t,x}(\mathbb{R}^{5+1})} \leq (24\pi^{2})^{-1/4}\|\nabla f_-\|_{L^2(\mathbb{R}^5)}
\end{equation}
with equality if $|\xi|\widehat{f_-}(\xi) = \exp(-|\xi|)$.

The space-time Fourier transforms of $u_+^2$, $u_-^2$ and $u_+u_-$
have disjoint supports and therefore
\begin{align}\label{thefir}
\|u\|_{L^4_{t,x}(\mathbb{R}^{5+1})}^4 & = \|u_+^2 + u_-^2 + 2u_+u_-\|_{L^2_{t,x}(\mathbb{R}^{5+1})}^2\nonumber\\
&= \|u_+\|^4_{L^4_{t,x}(\mathbb{R}^{5+1})} +
\|u_-\|_{L^4_{t,x}(\mathbb{R}^{5+1})}^4 +
4\|u_+u_-\|_{L^2_{t,x}(\mathbb{R}^{5+1})}^2.
\end{align}
By the Cauchy--Schwarz inequality,
$$
\|u_+u_-\|_{L^2_{t,x}(\mathbb{R}^{5+1})}^2 \leq \|u_+\|_{L^4_{t,x}(\mathbb{R}^{5+1})}^2\|u_-\|_{L^4_{t,x}(\mathbb{R}^{5+1})}^2
$$
with equality if $|u_+| = |u_-|$. We now apply the basic inequality
$$
2(X^2 + Y^2 + 4XY) \leq 3(X+Y)^2,
$$
which holds for all real numbers $X$ and $Y$, with equality if and
only if $X=Y$, to obtain
$$
\|u\|_{L^4_{t,x}(\mathbb{R}^{5+1})}^4 \le \frac{3}{2}\big(\|u_+\|_{L^4_{t,x}(\mathbb{R}^{5+1})}^2+\|u_-\|_{L^4_{t,x}(\mathbb{R}^{5+1})}^2\big)^2.
$$
Combined with \eqref{zero0} and \eqref{zero}, we see that
$$
\|u\|_{L^4_{t,x}(\mathbb{R}^{5+1})}^4  \leq
\frac{1}{16\pi^{2}}\big(\|\nabla f_+\|_{{L^2(\mathbb{R}^{5})}}^2 +
\|\nabla f_-\|_{{L^2(\mathbb{R}^{5})}}^2\big)^2.
$$
By the parallelogram law,
\begin{equation}\label{thesec}
\|\nabla f_+\|_{L^2(\mathbb{R}^5)}^2 + \|\nabla
f_-\|_{L^2(\mathbb{R}^5)}^2 = \frac{1}{2}\big(\|\nabla
u(0)\|_{L^2(\mathbb{R}^5)}^2 + \|\partial_t u(0)\|_{L^2(\mathbb{R}^5)}^2\big),
\end{equation}
so that
\begin{equation}\label{thiss}
\|u\|_{L^4_{t,x}(\mathbb{R}^{5+1})}^4 \leq \frac{1}{64\pi^{2}}\big(\|\nabla u(0)\|_{L^2(\mathbb{R}^5)}^2 +
\|\partial_t u(0)\|_{L^2(\mathbb{R}^5)}^2\big)^2,
\end{equation}
with equality when $(u(0),\partial_t u(0))$ is such that
$|\xi|\widehat{f_+}(\xi) = |\xi|\widehat{f_-}(\xi) = \exp(-|\xi|)$
(because then $|u_+| = |u_-|$).

It remains to characterise the maximisers. It follows from Corollary
\ref{seven} and the above argument that we have equality in
\eqref{thiss} if and only if
\begin{equation*} \label{e:fplus}
|\xi|\widehat{f_+}(\xi) = \exp(a_+|\xi| + ib_+ \cdot \xi + c_+),
\quad |\xi|\widehat{f_-}(\xi) = \exp(a_-|\xi| + ib_- \cdot \xi +
c_-),
\end{equation*}
and
\begin{equation} \label{e:modulus}
|u_+(t,x)| = |u_-(t,x)| \qquad \text{for almost every $(t,x) \in \mathbb{R} \times \mathbb{R}^5$},
\end{equation}
where $a_+,a_-,c_+,c_-\in \C$, $b_+,b_- \in \mathbb{R}^5$ and
$\text{Re}(a_+),\text{Re}(a_-)<0$. We will see that this is true if
and only if $a_+ = \overline{a_-}$, $b_+ = b_-$ and $\text{Re}(c_+)
= \text{Re}(c_-)$.

To this end we define $\Lambda_{a,b,c}$ by
$$
\Lambda_{a,b,c}(t,x)= \frac{1}{(2\pi)^5} \bigg| \exp(c)
\int_{\mathbb{R}^5} \exp(i(b+x) \cdot \xi + (a+it)|\xi|)
\,\frac{d\xi}{|\xi|} \bigg|.
$$
As $|u_+|=\Lambda_{a_+,b_+,c_+}$ and
$|u_-|=|\overline{u_-}|=\Lambda_{\overline{a_-},b_-,\overline{c_-}}$,
and these functions are  continuous,  we see by \eqref{e:modulus}
that
\begin{equation}\label{theq}
\Lambda_{a_+,b_+,c_+}(t,x) =
\Lambda_{\overline{a_-},b_-,\overline{c_-}}(t,x) \qquad \text{for
each $(t,x) \in \mathbb{R} \times \mathbb{R}^5$}.
\end{equation}
As in \cite{Foschi}, we claim that knowledge of $\Lambda_{a,b,c}$
  uniquely determines $a$, $b$ and $\text{Re}(c)$.  Given \eqref{theq}, it would then
follow that $a_+ = \overline{a_-}, b_+=b_-$ and $\text{Re}(c_+) =
\text{Re}(c_-)$.

Firstly, we note that
$$
\Lambda_{a,b,c}(t,x) \leq \frac{\exp(\text{Re}(c))}{(2\pi)^5}
\int_{\mathbb{R}^5} \exp(\text{Re}(a)|\xi|) \, \frac{d\xi}{|\xi|} =
\Lambda_{a,b,c}(-\text{Im}(a),-b),
$$
so we see that $\Lambda_{a,b,c}$ attains its maximum at
$(-\text{Im}(a),-b)$. Thus, $\text{Im}(a)$ and $b$ are uniquely
determined. Secondly,
$$
\Lambda_{a,b,c}(t-\text{Im}(a),-b) =
\frac{\exp(\text{Re}(c))}{(2\pi)^5} \bigg| \int_{\mathbb{R}^5}
\exp\big((\text{Re}(a) + it)|\xi|\big)\, \frac{d\xi}{|\xi|} \bigg| =
C_0 \frac{\exp(\text{Re}(c))}{|\text{Re}(a) + it|^4}
$$
where $C_0$ is an absolute constant. Thus,
$$
C_0\Lambda_{a,b,c}(t-\text{Im}(a),-b)^{-1} = \exp(-\text{Re}(c))
(\text{Re}(a)^2 + t^2)^2
$$
which is a polynomial in $t$. Since the coefficient of $t^4$ is
$\exp(-\text{Re}(c))$ we have determined $\text{Re}(c)$, and since
the constant term is $\exp(-\text{Re}(c))\,\text{Re}(a)^4$ we have
then determined $\text{Re}(a)$.

It remains to prove that these maximisers can be obtained from
\begin{equation}\label{maxi}
\big(u(0),\partial_tu(0)\big)=\big(0,(1+|\cdot|^2)^{-3}\big)
\end{equation}
under the action of (W1)--(W3) as defined in the introduction. It is
easy to calculate that the ratio
$$
\|u\|_{L^4_{t,x}(\R^{5+1})}\big(\|\nabla
u(0)\|^2_{L^2(\R^5)}+\|\partial_t u(0)\|_{L^2(\R^5)}^2\big)^{-1/2}
$$
is preserved under the action of (W1) and (W2). Appealing to
\eqref{thefir} and \eqref{thesec} we see that the ratio is also
preserved under (W3). We remark that this final invariance does not
hold in general for Strichartz inequalities.

Taking Fourier transforms of the data in \eqref{maxi} we obtain
$$
\big(\widehat{u(0)}(\xi),\widehat{\partial_t u(0)}(\xi)\big) = \big(0,c_0\exp(-|\xi|)\big),
$$
for some $c_0 > 0$, (see for example \cite[pp. 61]{stein}).
Consequently,
\begin{equation} \label{e:flighthome}
\big(|\xi|\widehat{f_+}(\xi),|\xi|\widehat{f_-}(\xi)\big) = \big(\tfrac{1}{2i} c_0 \exp(-|\xi|),
-\tfrac{1}{2i} c_0 \exp(-|\xi|) \big).
\end{equation}
In general, the data $\widehat{f_\pm}(\xi)$ transforms to
$$
\exp( \pm it_0 |\xi| + i x_0 \cdot \xi) \widehat{f_\pm}(\xi), \quad \lambda_1 \lambda_2^{-5} \widehat{f_\pm}(\lambda_2^{-1}\xi), \quad \exp(i \theta_\pm) \widehat{f_\pm}(\xi)
$$
under the action of (W1), (W2), (W3), respectively. It is now
straightforward to check that the pair \eqref{e:flighthome}
transforms under the action of (W1)--(W3) to
$$
\big(|\xi|\widehat{f_+}(\xi),|\xi|\widehat{f_-}(\xi)\big) = \big(\exp(a|\xi| + ib \cdot \xi + c_+),
\exp(\overline{a}|\xi| + ib \cdot \xi + c_- ) \big),
$$
where $\mbox{Re}(c_+) = \mbox{Re}(c_-)$, and so we are done. \qed

\begin{remark} Foschi \cite{Foschi} combined similar arguments with his sharp estimate for the one-sided operator,
$$
\big\| e^{it\sqrt{-\Delta}} f \big\|_{L^{6}_{t,x}(\R^{2+1})} \leq
\Big(\frac{1}{2\pi}\Big)^{1/6} \|f\|_{\dot{H}^{\frac{1}{2}}(\R^2)},
$$
to prove that solutions to the wave equation satisfy
\begin{equation}\label{notsharp}
\|u\|_{L^6_{t,x}(\R^{2+1})} \leq \Big(\frac{25}{64\pi}\Big)^{1/6} \Big(\|u(0)\|^2_{\dot{H}^{\frac{1}{2}}(\mathbb{R}^2)}+\|\partial_t
u(0)\|^2_{\dot{H}^{-\frac{1}{2}}(\mathbb{R}^2)}\Big)^{1/2}.
\end{equation}
He also claimed that the constant in \eqref{notsharp} is attained by
the initial data
$$(u(0),\partial_tu(0))=\big((1+|\cdot|^2)^{-1/2},0\big),$$ modulo
the action of a group of symmetries, however this appears to be
false. In the proof of \eqref{notsharp}, the inequality
$$
|\langle u_+^3,u_+^2u_-\rangle_{t,x}|\le \|u_+^3\|_{L^2_{t,x}(\R^{2+1})}\|u_+^2u_-\|_{L^2_{t,x}(\R^{2+1})},
$$
is used and this holds strictly for such data.
\end{remark}

\section{Proof of Theorem \ref{t:FoschiCarneiro} -- the sharp inequality} \label{section:Carneirowave}

Define the Fourier transform in space and time
$\,\widetilde{\,}\,\,$ by
$$
\widetilde{f}(\tau,\xi)=\int_{\R^{d+1}} f(t,x)\exp\big(-i(t\tau+x\cdot\xi)\big)\,dtdx.
$$
Writing
$$
u_k(t,x) = \prod_{j=1}^k e^{it\sqrt{-\Delta}}f_j(x),
$$
by Plancherel's theorem, we have
$$
\Big\| \prod_{j=1}^k e^{it\sqrt{-\Delta}}f_j\Big\|_{L^2_{t,x}(\mathbb{R}^{d+1})}^2 = (2\pi)^{-(d+1)}\| \widetilde{u_k} \|_
{L^2_{\tau,\xi}(\mathbb{R}^{d+1})}^2.
$$

It is easy to see that,
\begin{eqnarray*}
(e^{it\sqrt{-\Delta}} f_j)^{\thicksim}(\tau,\xi) = 2\pi\delta(\tau - |\xi|)
\widehat{f}_j(\xi),
\end{eqnarray*}
so that defining $\widehat{F} = \otimes_{j=1}^k |\cdot|^{1/2}
\widehat{f}_j$,  we have
$$
\widetilde{u_k}(\tau,\xi) = \frac{1}{(2\pi)^{d(k-1)-1}}\int_{\R^{kd}}
\frac{\widehat{F}(\eta)}{\prod_{j=1}^k |\eta_j|^{1/2}}\, \delta
\Big(\tau - \sum_{j=1}^k |\eta_j|\Big)\delta\Big(\xi - \sum_{j=1}^k \eta_j\Big)
d\eta.
$$
By the Cauchy--Schwarz inequality, this implies that
\begin{equation}\label{klf4}
|\widetilde{u_k}(\tau,\xi)|^2 \leq
\frac{I_k(\tau,\xi)}{(2\pi)^{2d(k-1)-2}} \int_{\R^{kd}} |\widehat{F}
(\eta)|^2 K(\eta)^{2\alpha(k)} \delta\Big(\tau - \sum_{j=1}^k
|\eta_j|\Big)\delta\Big(\xi - \sum_{j=1}^k \eta_j \Big)d\eta,
\end{equation}
where $ \alpha(k) = \frac{(d-1)(k-1)}{2}-1$ and
$$
I_k(\tau,\xi) = \int_{\R^{kd}} \frac{1}{K(\eta)^{2\alpha(k)}
\prod_{j=1}^k |\eta_j|}\,\, \delta \Big(\tau - \sum_{j=1}^k |\eta_j|\Big)
\delta\Big(\xi - \sum_{j=1}^k \eta_j \Big)d\eta.
$$
Crucially, on the support of the delta measures we have
$$
2K(\eta)^2 = \Big( \sum_{j=1}^k |\eta_j| \Big)^2 -
\Big|\sum_{j=1}^k \eta_j \Big|^2  = \tau^2 - |\xi|^2
$$
and therefore
\begin{equation}\label{klf}
I_k(\tau,\xi) = 2^{\alpha(k)}(\tau^2 - |\xi|^2)^{-\alpha(k)} \widetilde{I_k}(\tau,\xi),
\end{equation}
where
$$
\widetilde{I_k}(\tau,\xi) = \int_{\R^{kd}}  \frac{1}{\prod_{j=1}^k
|\eta_j|}\,\,  \delta \Big(\tau - \sum_{j=1}^k |\eta_j|\Big)\delta\Big(\xi -
\sum_{j=1}^k \eta_j \Big) d\eta.
$$

The following lemma was proven by Foschi in the cases $(d,k)=(2,3)$
and $(3,2)$. We  generalise his argument by induction.

\begin{lemma} \label{l:Iwave}
For each $(\tau,\xi)$ with $|\xi| < \tau$, we have
$$
\widetilde{I_2}(\tau,\xi) =
(\tau^2-|\xi|^2)^{\alpha(2)}\frac{|\mathbb{S}^{d-1}|}{2^{d-2}}
$$
and for $k \geq 3$,
$$
\widetilde{I_k}(\tau,\xi) =
(\tau^2-|\xi|^2)^{\alpha(k)}\frac{|\mathbb{S}^{d-1}|^{k-1}}{2^{2\alpha(k)+1}}
\bigg( \prod_{j=2}^{k-1} \emph{B}\big(d-1,\alpha(j) + 1\big) \bigg).
$$
\end{lemma}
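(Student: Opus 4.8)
The plan is to recognise $\widetilde{I_k}$ as an iterated convolution on the light cone and to run an induction on $k$. Integrating out the last variable $\eta_k = \zeta$ against the spatial delta, and then integrating the temporal delta, yields the recursion
$$
\widetilde{I_{k+1}}(\tau,\xi) = \int_{\R^d} \frac{1}{|\zeta|}\, \widetilde{I_k}\big(\tau - |\zeta|,\, \xi - \zeta\big)\, d\zeta,
$$
valid as an ordinary integral because $\alpha(k) > -1$ for all $d,k \ge 2$ guarantees local integrability. I would first record the scaling identity $\widetilde{I_k}(\lambda\tau,\lambda\xi) = \lambda^{2\alpha(k)}\widetilde{I_k}(\tau,\xi)$, obtained from the substitution $\eta_j \mapsto \lambda\eta_j$; this is where the relation $2\alpha(k) = (d-1)(k-1) - 2$ enters. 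The target of the induction is $\widetilde{I_k}(\tau,\xi) = c_k\,(\tau^2 - |\xi|^2)^{\alpha(k)}$ on $\{|\xi|<\tau\}$, with $c_2 = |\mathbb{S}^{d-1}|2^{-(d-2)}$ and $c_{k+1} = c_k\, 2^{-(d-1)}|\mathbb{S}^{d-1}|\,\mathrm{B}(d-1,\alpha(k)+1)$.

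For the base case $k=2$ the recursion collapses to $\widetilde{I_2}(\tau,\xi) = \int_{\R^d}|\eta|^{-1}|\xi-\eta|^{-1}\delta(\tau - |\eta| - |\xi-\eta|)\,d\eta$, supported on the prolate spheroid with foci $0$ and $\xi$. Since the forward cone measure $|\zeta|^{-1}\delta(\sigma - |\zeta|)$ is invariant under the orthochronous Lorentz group, so is $\widetilde{I_2}$, and a boost reduces to $\xi = 0$, where $\widetilde{I_2}(\tau,0) = |\mathbb{S}^{d-1}|\int_0^\infty r^{d-3}\delta(\tau - 2r)\,dr = |\mathbb{S}^{d-1}|2^{-(d-2)}\tau^{d-3}$; as $d-3 = 2\alpha(2)$ this fixes $c_2$. (Alternatively one evaluates this one-dimensional integral directly in spheroidal coordinates, avoiding any appeal to invariance.)

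For the inductive step I would substitute $\widetilde{I_k}(\sigma,\mu) = c_k(\sigma^2-|\mu|^2)^{\alpha(k)}$ into the recursion and pass to polar coordinates $\zeta = r\omega$. The key simplification is
$$
(\tau-|\zeta|)^2 - |\xi - \zeta|^2 = (\tau^2 - |\xi|^2) - 2r\,(\tau - \xi\cdot\omega),
$$
so the radial integral $\int_0^{\,\cdot\,} r^{d-2}\big[(\tau^2-|\xi|^2) - 2r(\tau-\xi\cdot\omega)\big]^{\alpha(k)}\,dr$, after rescaling the upper limit to $1$, equals $\mathrm{B}(d-1,\alpha(k)+1)\,(\tau^2-|\xi|^2)^{\alpha(k)+d-1}[2(\tau-\xi\cdot\omega)]^{-(d-1)}$. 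The angular integral is then the standard identity $\int_{\mathbb{S}^{d-1}}(\tau-\xi\cdot\omega)^{-(d-1)}\,d\omega = |\mathbb{S}^{d-1}|(\tau^2-|\xi|^2)^{-(d-1)/2}$, which I would prove by reducing to a one-dimensional integral. Collecting exponents gives $\alpha(k) + d - 1 - \tfrac{d-1}{2} = \alpha(k) + \tfrac{d-1}{2} = \alpha(k+1)$, confirming the form and the recursion for $c_{k+1}$; telescoping from $c_2$ contributes $k-2$ beta factors and the power $2^{(d-2)+(d-1)(k-2)} = 2^{2\alpha(k)+1}$, which is exactly the stated formula.

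The main obstacle is the bookkeeping that forces the powers of $\tau^2-|\xi|^2$ to recombine into precisely $\alpha(k+1)$ at every step: this relies both on the crucial relation $2K(\eta)^2 = \tau^2 - |\xi|^2$ isolated before the lemma and on the angular integral supplying the compensating factor $(\tau^2-|\xi|^2)^{-(d-1)/2}$. Proving that angular identity (equivalently, the Lorentz invariance used in the base case) is the one genuinely non-formal computation; the remainder is Fubini, a single beta integral, and a telescoping product.
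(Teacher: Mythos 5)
Your proof is correct, and it takes a genuinely different route at the inductive step. The paper first establishes, for every $k$ simultaneously, the identity $\widetilde{I_k}(\tau,\xi) = (\tau^2-|\xi|^2)^{\alpha(k)}\,\widetilde{I_k}(1,0)$ by combining Lorentz invariance of the $k$-fold convolution of the cone measure with the homogeneity you record; the recursion is then run only at the normalised point $(1,0)$, where the peeled-off integral is purely radial, namely $|\mathbb{S}^{d-1}|\int_0^{1/2}(1-2r)^{\alpha(k-1)}r^{d-2}\,dr$, and no angular analysis is needed beyond the single invariance argument. You instead invoke Lorentz invariance only for the base case $k=2$ and carry out the induction at a general point of the solid cone, which forces you through the spherical integral $\int_{\mathbb{S}^{d-1}}(\tau-\xi\cdot\omega)^{-(d-1)}\,d\omega = |\mathbb{S}^{d-1}|(\tau^2-|\xi|^2)^{-(d-1)/2}$; as you yourself observe, this identity is the Lorentz invariance of the cone measure in disguise (it follows, for instance, from the boost-induced substitution $u \mapsto (\tau u - |\xi|)/(\tau - |\xi|u)$ on $[-1,1]$), so in effect you pay per inductive step for what the paper pays for once. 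Your bookkeeping checks out: the factorisation $(\tau-r)^2 - |\xi-r\omega|^2 = (\tau^2-|\xi|^2) - 2r(\tau-\xi\cdot\omega)$ yields the correct truncation $r < (\tau^2-|\xi|^2)/2(\tau-\xi\cdot\omega)$, which automatically keeps $\tau - r > 0$; the radial beta integral and the angular identity recombine the exponents as $\alpha(k) + (d-1) - \tfrac{d-1}{2} = \alpha(k+1)$; and the telescoped constant $2^{-(d-2)-(d-1)(k-2)}|\mathbb{S}^{d-1}|^{k-1}\prod_{j=2}^{k-1}\mathrm{B}(d-1,\alpha(j)+1)$ matches the lemma because $(d-2)+(d-1)(k-2) = 2\alpha(k)+1$. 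Incidentally, the scaling identity you record is never actually used, since your base case is computed at general $\tau$ after the boost and your inductive step runs at general $(\tau,\xi)$. The trade-off between the two arguments: yours keeps the induction hypothesis as a closed formula on the whole cone, making each step self-verifying modulo one classical spherical integral, whereas the paper's normalisation to $(1,0)$ renders every integral after the invariance step elementary and radial.
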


\begin{proof}
We begin by recording certain invariances of $\widetilde{I}_k$. Note
that $\widetilde{I_k}$ is the $k$-fold convolution of $\mu$, where
$$
\mu(\tau,\xi) = |\xi|^{-1}\delta(\tau - |\xi|) = 2\delta(\varrho(\tau,\xi))
\chi_{\tau > 0}
$$
and $\varrho : \mathbb{R}^d \rightarrow \mathbb{R}$ is the Minkowski
form given by $\varrho(\tau,\xi) = \tau^2 - |\xi|^2$. It is
well-known, and straightforward to verify, that $\varrho$ is
invariant under Lorentz transformations; that is,
$$
\varrho\big(T_v(\tau,\xi)\big) = \varrho(\tau,\xi)
$$
where the Lorentz transformation $T_v$ is given by
$$
T_v\left[ \begin{array}{cccccccccc} \tau \\ \xi  \end{array}\right] =
\left[ \begin{array}{cccccccccc} \gamma & -\gamma v^t \\ -\gamma v & I_d +
\frac{\gamma-1}{|v|^2} vv^t \end{array} \right] \left[ \begin{array}{ccccccc}
\tau \\ \xi \end{array} \right] = \left[ \begin{array}{ccccccc} \gamma(\tau -
v \cdot \xi) \\ \xi + (\frac{\gamma - 1}{|v|^2}v \cdot \xi - \gamma \tau  )v
\end{array} \right]
$$
for $v \in \mathbb{R}^d$ such that $|v| < 1$, and $\gamma =
(1-|v|^2)^{-1/2}$. Since $|\det T_v| = 1$ it follows that the
$k$-fold convolution of $\mu$ is also invariant under each $T_v$.
Taking $v=-\xi/\tau$, as we may, we have that $\gamma =
(\tau^2-|\xi|^2)^{-1/2}\tau$, and
$$
T_v \left[ \begin{array}{cccccccccc} (\tau^2-|\xi|^2)^{1/2} \\ 0  \end{array}
\right] = \left[ \begin{array}{cccccccccc} \tau \\ -\tau v
\end{array} \right] = \left[ \begin{array}{cccccccccc} \tau \\ \xi
\end{array} \right],
$$
so that
\begin{equation}\label{thefirstbit}
\widetilde{I_k}(\tau,\xi) = \widetilde{I_k}\big((\tau^2-|\xi|^2)^{1/2},0\big).
\end{equation}

Furthermore, by a simple change of variables and homogeneity, for
each $\lambda > 0$,
$$
\widetilde{I_k}(\lambda \tau, \lambda \xi) = \lambda^{2\alpha(k)}
\widetilde {I_k}(\tau,\xi),
$$
where  $\alpha(k) = \frac{(d-1)(k-1)}{2}-1$, so that combined with
\eqref{thefirstbit}, we have
\begin{equation}\label{theabove}
\widetilde{I_k}(\tau, \xi)=(\tau^2-|\xi|^2)^{\alpha(k)}\widetilde{I_k}(1,0).
\end{equation}

Now we are able to compute the desired expression for
$\widetilde{I_2}(\tau,\xi)$ by a direct computation. By
\eqref{theabove} we get
\begin{align*}
\widetilde{I_2}(\tau,\xi) & =  (\tau^2 - |\xi|^2)^{\alpha(2)} \widetilde{I_2}(1,0) \\
& =  (\tau^2 - |\xi|^2)^{\alpha(2)} \int_{\R^{2d}} \delta(1-|\eta_1|-|\eta_2|)
\delta(-
\eta_1-\eta_2) \frac{d\eta_1d\eta_2}{|\eta_1||\eta_2|} \\
& =  (\tau^2 - |\xi|^2)^{\alpha(2)} \int_{\R^{d}} \delta(1-2|\eta_1|) \frac{d\eta_1}
{|\eta_1|^2},
\end{align*}
and hence, by polar coordinates,
\begin{equation} \label{e:I2}
\widetilde{I_2}(\tau,\xi)  =
(\tau^2 - |\xi|^2)^{\alpha(2)} \frac{|\mathbb{S}^{d-1}|}{2^{d-2}},
\end{equation}
as required.

Using \eqref{theabove}, for $k \geq 3$, observe that
\begin{align*}
\widetilde{I_k}(\tau,\xi) & =  (\tau^2 - |\xi|^2)^{\alpha(k)} \widetilde{I_k}(1,0) \\
& = (\tau^2 - |\xi|^2)^{\alpha(k)} \int_{\R^d} \left( \int_{\R^{(k-1)d}}
\delta\Big(1- \sum_{j=1}^k |\eta_j|\Big) \delta\Big(-\sum_{j=1}^k \eta_j\Big)
\frac{d\eta_2\cdots d\eta_k}
{|\eta_2|\cdots|\eta_k|} \right) \frac{d\eta_1}{|\eta_1|}
\end{align*}
and therefore
\begin{equation} \label{e:kfromk-1}
\widetilde{I_k}(\tau,\xi) = (\tau^2 - |\xi|^2)^{\alpha(k)} \int_{|\eta_1| \leq 1/2}
\widetilde{I_{k-1}}(1-|\eta_1|, -\eta_1) \frac{d\eta_1}{|\eta_1|}.
\end{equation}
Using \eqref{e:I2} and \eqref{e:kfromk-1} it follows that
\begin{align*}
\widetilde{I_3}(\tau,\xi) & = (\tau^2 - |\xi|^2)^{\alpha(3)} \frac{|\mathbb{S}^{d-1}|}{2^{d-2}}
\int_{|\eta| \leq 1/2} (1-2|\eta_1|)^{\alpha(2)}\,\frac{d\eta_1}{|\eta_1|} \\
& = (\tau^2 - |\xi|^2)^{\alpha(3)} \frac{|\mathbb{S}^{d-1}|^2}{2^{d-2}}
\int_{0}^{1/2} (1-2r)^{\alpha(2)} r^{d-2} \,dr.
\end{align*}
From this, \eqref{e:kfromk-1} and induction it follows that
\begin{equation*}
\widetilde{I_k}(\tau,\xi) = (\tau^2 -
|\xi|^2)^{\alpha(k)}\frac{|\mathbb{S}^{d-1}|^{k-1}}{2^{d-2}}
\prod_{j=2}^{k-1}  \bigg( \int_0^{1/2} (1-2r)^{\alpha(j)}r^{d-2}\,dr
\bigg)
\end{equation*}
which gives the desired formula for $\widetilde{I_k}(\tau,\xi)$ by a
simple change of variables.
\end{proof}

Combining Lemma~\ref{l:Iwave} with \eqref{klf} we obtain
$$
I_2(\tau,\xi) = 2^{-\frac{d-1}{2}} |\mathbb{S}^{d-1}|
$$
and
$$
I_k(\tau,\xi) = 2^{-(\alpha(k)+1)} |\mathbb{S}^{d-1}|^{k-1}
\prod_{j=2}^{k-1} \text{B}\big(d-1,\alpha(j) + 1\big)
$$
if $k \geq 3$. Substituting into \eqref{klf4}, integrating over
$(\tau,\xi)$, applying Fubini and Plancherel's theorem, we get
$$
\|u_k\|_{L^2_{t,x}(\mathbb{R}^{d+1})}^{2} = (2\pi)^{-(d+1)}\|\widetilde{u_k}\|^2_{L^2_{\tau,\xi}(\mathbb{R}^{d+1})} \leq {\mbox{W} }(d,k)
\int_{\R^{kd}} |\widehat{F}(\eta)|^2 K(\eta)^{2\alpha(k)} \,d\eta,
$$
as required.

We note that if $|\eta_j|\widehat{f}_j(\eta_j) = \exp(a|\eta_j| +
b\cdot \eta_j + c_j)$, where $a,c_1,\ldots,c_k\in \C$, $b\in\C^d$
with $\Re(a)<0$ and $|\Re(b)|<-\Re(a)$, it follows that
$$
\widehat{F}(\eta) = \exp\bigg(a\tau + b \cdot \xi + \sum_{j=1}^k c_j\bigg) \frac{1}{\prod_{j=1}^k |\eta_j|^{1/2}}
$$
on the support of the delta measures. Hence, for such $f_j$ there is
equality in \eqref{klf4} and so the constant ${\mbox{W} }(d,k)$ is
sharp whenever $(d,k) \neq (2,2)$. In the next section we show that
there are no further maximisers, following the approach of Foschi
\cite{Foschi}.

\section{Proof of Theorem \ref{t:FoschiCarneiro} -- characterisation of the
maximisers}\label{characterization}

There is equality in \eqref{klf4} if and only if there exists a
scalar function $\Lambda$  such that
\begin{equation} \label{e:equalitycondition}
 K(\eta)^{\alpha(k)}\widehat{F}(\eta) = \Lambda(\tau,\xi)
K(\eta)^{-\alpha(k)} \prod_{j=1}^k |\eta_j|^{-1/2}
\end{equation}
almost everywhere on the support of the delta measures. Writing
$g_j=|\cdot|\widehat{f}_j$ for all $j=1,\ldots,k$ and $G(\tau,\xi)=(
\tau^2 - |\xi |^2 )^{-\alpha(k)}\Lambda(\tau,\xi)$,
\eqref{e:equalitycondition} implies that the functional equation,
\begin{equation}\label{gw}
\prod_{j=1}^k g_j(\eta_j)= G\Big(\sum_{j=1}^k|\eta_j|, \sum_{j=1}^k \eta_j\Big)
\qquad \text{for almost every $(\eta_1,\ldots,\eta_k)\in (\R^{d})^k$,}
\end{equation}
holds. Since the right-hand side of \eqref{gw} is symmetric in
$\eta_j$ and $\eta_\ell$ it follows that $g_j=\lambda g_\ell$ for
some $\lambda\in\C$. By normalising, we can thus assume that
$g_1=\cdots=g_k=g$.

Note that when $f_1 = \cdots = f_k = f$ and $(d,k) \neq (2,2)$ the
right-hand side of \eqref{e:FoschiCarneiro} is comparable to
\begin{equation} \label{e:RHS}
\| f \|_{\dot{H}^{\frac{1}{2}}(\mathbb{R}^d)}^{2(k-2)} \int_{\mathbb{R}^d} \int_{\mathbb{R}^d} |\widehat{f}(\eta_1)|^2 |\widehat{f}(\eta_2)|^2 |\eta_1|^{\alpha(k)+1} |\eta_2|^{\alpha(k)+1} (1-\eta_1' \cdot \eta_2')^{\alpha(k)} \,d\eta_1 d\eta_2,
\end{equation}
where $\eta_j' = |\eta_j|^{-1} \eta_j$. We claim that when this
quantity is finite then $g$ and $G$ satisfying \eqref{gw} are
continuous, where $g = |\cdot|\widehat{f}$. To see this, first note
that the finiteness of \eqref{e:RHS} implies that $g$ is locally
integrable. Indeed, if $k=2$ and $B$ is any euclidean ball centred
at the origin then, by the Cauchy--Schwarz inequality and the
finiteness of \eqref{e:RHS},
\begin{align*}
\Bigg(\int_{B}|g(\eta)|\,d\eta\Bigg)^2&=\int_{B}\int_{B}|g(\eta_1)||g(\eta_2)|\,d\eta_1d\eta_2\\
&\le C
\Bigg(\int_{B}\int_{B}|\eta_1|^{\frac{5-d}{2}}|\eta_2|^{\frac{5-d}{2}}(1- \eta_1'\cdot\eta_2')^{\frac{3-d}{2}}\,d\eta_1d\eta_2\Bigg)^{1/2} <\infty.
\end{align*}
When $k \geq 3$, we have that $f \in \dot{H}^{1/2}(\mathbb{R}^d)$,
and by a similar argument using the Cauchy--Schwarz inequality, we
get that $g$ is locally integrable.

Now, since \eqref{gw} holds for $g$ and $G$ we have that
$$
g(\eta_1)g(\eta_2) = \widetilde{G}(|\eta_1| + |\eta_2|, \eta_1 + \eta_2)
$$
for almost every $(\eta_1,\eta_2)\in \R^{d} \times \mathbb{R}^d$,
where $\widetilde{G}$ is equal to $G$ modulo composition with
certain translations and multiplication by nonzero scalars
(depending on $g$). The continuity of $g$ and $G$ now follows from
Lemma~7.20 (which in fact holds for all $d\ge 2$) and
Proposition~7.5 of \cite{Foschi}.

Hence, it suffices to characterise all solutions to the functional
equation
\begin{equation}\label{e:functionalequation}
g(\eta_1)g(\eta_2) = G(|\eta_1| + |\eta_2|, \eta_1 + \eta_2)
\qquad \text{for each $(\eta_1,\eta_2) \in \R^{d} \times \mathbb{R}^d$,}
\end{equation}
where $g$ and $G$ are continuous. In this case, we may assume $g(0)
\neq 0$. Otherwise \eqref{e:functionalequation} gives
\begin{equation*}\label{thesecond}
G(|\eta|,\eta) = g(\eta)g(0)=0
\end{equation*}
for all $\eta \in \R^d$, which, combined with
\eqref{e:functionalequation} again, implies that
$$
g(\eta)^2 = G(2|\eta|,2\eta) = G(|2\eta|,2\eta)=0
$$
for all $\eta\in\R^d$, and this is the trivial case.

Noting that $G(0,0)=g(0)^{2}\neq 0$, we can rewrite
\eqref{e:functionalequation} as
$$
H(|\eta_1|,\eta_1)H(|\eta_2|,\eta_2) = H(|\eta_1| + |\eta_2|,\eta_1+\eta_2) \qquad \text{for each $(\eta_1,\eta_2)\in \R^{d} \times \mathbb{R}^d$},
$$
where $H(\tau,\xi)=G(0,0)^{-1}G(\tau,\xi)$. By algebraic properties
of the cone (see \cite[Lemma 7.18]{Foschi}), this implies
$$
H(X)H(Y)=H(X+Y) \qquad \text{for all $X,Y\in\{\,(\tau,\xi)\in\R^{d+1}\,:\, \tau>|\xi|\,\}$.}
$$
Thus, by \cite[Lemma~7.1]{Foschi}, there exists $a\in\C$ and
$b\in\C^{d}$ such that
$$
H(\tau,\xi)=\exp(a\tau+b\cdot\xi)
$$
for $(\tau,\xi)$ in the solid cone. Choosing $c\in\C$ such that
$\exp(2c)=G(0,0)$, we obtain that
$G(\tau,\xi)=\exp(a\tau+b\cdot\xi+2c)$, so that $g(
\xi)=\exp(a|\xi|+b\cdot\xi+c)$. Thus, the Fourier transforms of the
maximisers $f$ take the form
$$
\widehat{f}(\xi)=|\xi|^{-1}\exp(a|\xi|+b\cdot\xi+c).
$$

It remains to check, under which conditions on $a$ and $b$, the
right-hand side of \eqref{e:FoschiCarneiro}, or equivalently the
quantity \eqref{e:RHS}, is finite. It is easy to see that $\Re(a)<
0$ is necessary and $c \in \mathbb{C}^d$ has no effect on such
considerations. So we assume $\Re(a)< 0$ and $c = 0$ from now on,
and consider the cases $\alpha(k)=0$ and $\alpha(k)>0$ separately.

When $\alpha(k)=0$ the quantity \eqref{e:RHS} is equal to the $k$th
power of
$$
\|f\|_{\dot{H}^{\frac{1}{2}}(\mathbb{R}^d)}^2 = \int_{\R^{d}} \exp(2\Re (a)|\xi|+2\Re(b)\cdot\xi) \,\frac{d\xi}{|\xi|}.
$$
Using polar coordinates this is equal to a constant multiple of
$$
\int_{-1}^1 \int_0^\infty \exp \Big( (2\Re (a) + 2|\Re (b)| u)r \Big) r^{d-2} (1-u^2)^{\frac{d-3}{2}} \,drdu,
$$
which is finite if and only if $|\Re (b)| < -\Re (a)$.

When $\alpha(k)>0$, the quantity \eqref{e:RHS} is bounded above by
$$
\|f\|_{\dot{H}^{\frac{1}{2}}(\mathbb{R}^d)}^{2(k-2)} \|f\|_{\dot{H}^{\frac{\alpha(k)+1}{2}}(\mathbb{R}^d)}^4
$$
and using polar coordinates as above it is easy to check that if
$|\Re(b)|<-\Re(a)$ then $\|f\|_{\dot{H}^{s}(\mathbb{R}^d)} < \infty$
for any $s \geq 1/2$. Also, we have shown above that if $|\Re(b)|
\ge -\Re(a)$ then $\|f\|_{\dot{H}^{1/2}(\mathbb{R}^d)}$ is not
finite, and therefore the quantity \eqref{e:RHS} is not finite if $k
\geq 3$. Hence it remains to show that for $k=2$, $d \geq 4$ and
$|\Re(b)| \ge -\Re(a)$ the quantity \eqref{e:RHS} is not finite. In
this case, \eqref{e:RHS} is equal to
\begin{align} \label{e:RHS'}
\int_{\mathbb{R}^d} \int_{\mathbb{R}^d} \exp(2\Re (a)|\eta_1| + 2\Re (b) \cdot \eta_1) & \exp(2\Re (a)|\eta_2| + 2\Re (b) \cdot \eta_2) \notag  \\
& |\eta_1|^{\frac{d-5}{2}}|\eta_2|^{\frac{d-5}{2}}(1-\eta_1' \cdot \eta_2')^{\frac{d-3}{2}} \,d\eta_1 d\eta_2.
\end{align}
Let $\Omega \subset \mathbb{S}^{d-1}$ be a closed cap centred at
$\Re (b)'$ so that $\Re (b)' \cdot \eta_2' \geq 1-\varepsilon$ for
each $\eta_2' \in \Omega$.  Then, for each $\eta_1$ with $\eta_1'
\in \Omega^\perp$, we have that the $d\eta_2$-integral in
\eqref{e:RHS'} is bounded below by a constant multiple of
\begin{equation*}\label{bitty}
\int_{\Omega} \int_0^\infty  \exp
\Big( (2\Re (a) + 2\Re (b) \cdot \eta_2')r \Big)
r^{\frac{3d-7}{2}} \,drd\sigma(\eta_2'),
\end{equation*}
which is not finite when $|\Re(b)| \ge -\Re(a)$, and hence neither
is \eqref{e:RHS}. This completes the proof Theorem
\ref{t:FoschiCarneiro}.

\section{The Schr\"odinger equation : Carneiro's inequality revisited} \label{section:Carneiro}

The following theorem is the natural analogue of Theorem \ref{fq2}
for the Schr\"odinger evolution operator and is due to Carneiro
\cite{Carneiro}.

\begin{theorem} \label{t:Carneiro}
Let $d\ge 2$. Then the inequality
\begin{equation*}
\big\| e^{it\Delta} f_1\,e^{it\Delta} f_2 \big\|^{2}_{L^{2}_{t,x}(\mathbb{R}^{d+1})}
\leq \emph{S}(d,2) \int_{\R^ {2d}}
|\widehat{f}_1(\xi_1)|^2|\widehat{f}_2(\xi_2)|^2|\xi_1 -
\xi_2|^{d-2} \,d\xi_1d\xi_2
\end{equation*}
holds with sharp constant given by
$$
\emph{S}(d,2) = 2^{-d} (2\pi)^{-3d+1} |\mathbb{S}^{d-1}|
$$
which is attained if and only if
$$\widehat{f}_j(\xi)=\exp(a|\xi|^2+b\cdot\xi+c_j),$$ where $a,c_1,c_2 \in \C$,
$b\in\C^d$ and $\Re (a)<0$.
\end{theorem}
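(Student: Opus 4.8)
The plan is to mirror the proof of Theorem~\ref{t:FoschiCarneiro} given in Sections~\ref{section:Carneirowave} and~\ref{characterization}, systematically replacing the light cone by the paraboloid. First I would pass to the space-time Fourier transform. Since $(e^{it\Delta}f_j)^{\thicksim}(\tau,\xi) = 2\pi\delta(\tau + |\xi|^2)\widehat{f}_j(\xi)$, forming the product $u_2 = e^{it\Delta}f_1\,e^{it\Delta}f_2$ and convolving gives
\begin{equation*}
\widetilde{u_2}(\tau,\xi) = (2\pi)^{1-d}\int_{\R^{2d}} \widehat{f}_1(\eta_1)\widehat{f}_2(\eta_2)\,\delta\big(\tau + |\eta_1|^2 + |\eta_2|^2\big)\delta\big(\xi - \eta_1 - \eta_2\big)\,d\eta_1 d\eta_2.
\end{equation*}
By the Cauchy--Schwarz inequality with the weight $|\eta_1-\eta_2|^{d-2}$,
\begin{equation*}
|\widetilde{u_2}(\tau,\xi)|^2 \le (2\pi)^{2-2d} J(\tau,\xi)\int_{\R^{2d}} |\widehat{f}_1(\eta_1)|^2|\widehat{f}_2(\eta_2)|^2 |\eta_1-\eta_2|^{d-2}\,\delta\big(\tau+|\eta_1|^2+|\eta_2|^2\big)\delta\big(\xi-\eta_1-\eta_2\big)\,d\eta,
\end{equation*}
where $J(\tau,\xi) = \int_{\R^{2d}} |\eta_1-\eta_2|^{-(d-2)}\delta(\tau + |\eta_1|^2+|\eta_2|^2)\delta(\xi - \eta_1 - \eta_2)\,d\eta$.

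The key step, playing the role of Lemma~\ref{l:Iwave}, is to show that $J(\tau,\xi)$ is constant on the support $\{|\xi|^2 < -2\tau\}$. The paraboloid analogue of the cone identity $2K(\eta)^2 = \tau^2 - |\xi|^2$ is that, on the support of the delta measures,
\begin{equation*}
|\eta_1 - \eta_2|^2 = 2\big(|\eta_1|^2 + |\eta_2|^2\big) - |\eta_1 + \eta_2|^2 = -2\tau - |\xi|^2,
\end{equation*}
which is constant. Hence $|\eta_1-\eta_2|^{-(d-2)} = (-2\tau-|\xi|^2)^{-(d-2)/2}$ pulls out of the integral, and the remaining surface integral is computed directly: eliminating $\eta_1$, completing the square as $|\xi - \eta_2|^2 + |\eta_2|^2 = 2|\eta_2 - \tfrac{\xi}{2}|^2 + \tfrac{|\xi|^2}{2}$, and passing to polar coordinates gives $J(\tau,\xi) = 2^{-d}|\mathbb{S}^{d-1}|$. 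Conceptually this constancy reflects the Galilean and scaling invariance of the measure $\delta(\tau + |\eta_1|^2+|\eta_2|^2)\delta(\xi - \eta_1-\eta_2)\,d\eta$, but a direct computation is cleaner than the Lorentz argument needed for the wave equation. Integrating the displayed bound in $(\tau,\xi)$ collapses the delta measures, and Plancherel's theorem $\|u_2\|_{L^2_{t,x}}^2 = (2\pi)^{-(d+1)}\|\widetilde{u_2}\|^2_{L^2_{\tau,\xi}}$ then yields the inequality with constant $(2\pi)^{-(d+1)}(2\pi)^{2-2d}2^{-d}|\mathbb{S}^{d-1}| = \emph{S}(d,2)$. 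Note that here the weight exponent $d-2$ is nonnegative for every $d\ge 2$, so there is no exceptional dimension to exclude, in contrast with the wave case $(d,k)=(2,2)$.

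For sharpness and the characterisation I would follow Section~\ref{characterization}. Equality in Cauchy--Schwarz forces $\widehat{f}_1(\eta_1)\widehat{f}_2(\eta_2)$ to agree almost everywhere on the support with a multiple of $|\eta_1-\eta_2|^{-(d-2)}$, which by the identity above is a function of $(\tau,\xi)$ alone; writing $g_j = \widehat{f}_j$ this produces the functional equation $g_1(\eta_1)g_2(\eta_2) = G(|\eta_1|^2 + |\eta_2|^2, \eta_1+\eta_2)$. Symmetry of the right-hand side in $\eta_1,\eta_2$ forces $g_1 = \lambda g_2$, so after normalising $g_1=g_2=g$. As in the wave case, finiteness of the right-hand side makes $g$ locally integrable and hence continuous (via the smoothing lemmas of Foschi, now applied to the paraboloid), and one reduces to $g(\eta_1)g(\eta_2) = G(|\eta_1|^2 + |\eta_2|^2, \eta_1+\eta_2)$ with $g,G$ continuous. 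Solving this exponential-type functional equation on the paraboloid gives $g(\xi) = \exp(a|\xi|^2 + b\cdot\xi + c)$. Finally, inserting this into the right-hand side and using polar coordinates shows that the integral is finite precisely when $\Re(a) < 0$, with no constraint on $b$, since the Gaussian factor $\exp(2\Re(a)|\eta|^2)$ dominates the linear term; this is the main structural difference from the wave case, where the condition $|\Re(b)| < -\Re(a)$ was required.

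I expect the main obstacle to be the functional-equation endgame. Unlike the solid light cone, the region $\{\sigma \ge |\xi|^2/2\}$ swept out by the sums $(|\eta_1|^2+|\eta_2|^2, \eta_1+\eta_2)$ is not closed under addition, so one cannot directly reduce to a homomorphism on a convex cone. I would handle this exactly as Foschi does for the paraboloid --- the same algebraic and continuity lemmas underlying the Schr\"odinger--Strichartz characterisation in dimensions $d=1,2$ --- together with verifying the continuity of $g$ and $G$. By comparison, the analytic input (the constancy of the balance integral $J$ and the convergence analysis) is comparatively routine.
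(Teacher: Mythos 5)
Your proposal is correct, and for the inequality itself it follows essentially the same route as the paper: pass to the space-time Fourier transform, apply Cauchy--Schwarz against the weight $|\eta_1-\eta_2|^{d-2}$, and exploit the identity $|\eta_1-\eta_2|^2 = 2(|\eta_1|^2+|\eta_2|^2)-|\eta_1+\eta_2|^2$ on the support of the delta measures to show the balance integral is constant. (Your sign conventions differ harmlessly: the paper first substitutes $t\to -t$ so as to work with $\delta(\tau-|\eta_1|^2-|\eta_2|^2)$, while you keep $\delta(\tau+|\eta_1|^2+|\eta_2|^2)$.) You diverge in two places. First, to evaluate the constant you compute the surface measure directly by eliminating $\eta_1$, completing the square as $|\xi-\eta_2|^2+|\eta_2|^2 = 2|\eta_2-\tfrac{\xi}{2}|^2+\tfrac{|\xi|^2}{2}$, and using polar coordinates; this checks out and yields $J(\tau,\xi)=2^{-d}|\mathbb{S}^{d-1}|$, matching the paper's value. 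The paper instead uses the Galilean affine invariance of $\mu(\tau,\xi)=\delta(\tau-|\xi|^2)$ under maps of determinant one, plus scaling, to reduce to $\widetilde{I}(1,0)$; for the bilinear case your computation is arguably shorter, while the invariance argument is the one that scales to the $k$-linear setting of Theorem \ref{t:last} and parallels the Lorentz argument of Lemma \ref{l:Iwave}. Second, for the characterisation of maximisers the paper does much less than you propose: it only observes that equality forces $f_1=\lambda f_2$ and then cites Carneiro \cite{Carneiro}, where the functional-equation analysis for the linear case was already carried out. You instead rerun the Section \ref{characterization} machinery on the paraboloid, and you correctly identify the genuine obstruction there --- the region $\{\sigma\ge |\xi|^2/2\}$ swept out by $(|\eta_1|^2+|\eta_2|^2,\eta_1+\eta_2)$ is not closed under addition (take $\eta_1\cdot\eta_2>0$), so the solid-cone semigroup reduction of Lemma 7.18 of \cite{Foschi} does not transfer verbatim --- deferring, appropriately, to Foschi's paraboloid lemmas. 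Your final observation that finiteness requires only $\Re(a)<0$ with no constraint on $b$, unlike the wave case, is also correct and consistent with the theorem statement. In short: same analytic core, a more computational evaluation of the constant, and a self-contained (if longer) treatment of the equality case where the paper simply cites prior work.
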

The case $d=1$ is special because, for $f_1$ and $f_2$ with
separated Fourier support, we have the identity\footnote{the authors
thank Luis Vega for bringing this to their attention}
\begin{equation} \label{e:Schroidentity}
\big\| e^{it\Delta} f_1\,e^{it\Delta} f_2 \big\|^{2}_{L^{2}_{t,x}(\mathbb{R}^{1+1})}
= \frac{1}{2(2\pi)^2} \int_{\R^{2}}
|\widehat{f}_1(\xi_1)|^2|\widehat{f}_2(\xi_2)|^2 \,\frac{d\xi_1d\xi_2}{|\xi_1 -
\xi_2|},
\end{equation}
which follows easily by changes of variables and Plancherel's
theorem. The estimate is implicit in the thesis of
Fefferman~\cite{fest} and the identity is evident from the
calculation in \cite[pp. 412]{bigstein} (see also \cite[Section
17]{fokl} for an analagous inequality for the extension operator on
$\mathbb{S}^1$). The interaction weight is  too singular for the
right-hand side of \eqref{e:Schroidentity} to be finite for
integrable $f_1=\lambda f_2\neq 0$. A manifestation of this is that
$\text{S}(1,2) = (2\pi)^{-2}$ is equal to twice the constant arising
in the identity~\eqref{e:Schroidentity}.

With $d=2$, the power of the interaction weight is zero, and so the
estimate reduces to the sharp version of the Strichartz estimate
\eqref{stricschro} due to Foschi \cite{Foschi}.

For the case $d=4$, Carneiro deduced the following corollary from
Theorem \ref{t:Carneiro} in the same way that the inequality of
Corollary~\ref{seven} was deduced from
Theorem~\ref{t:FoschiCarneiro}.

\begin{corollary}\label{c:Carneiro}
Let $d=4$. Then
\begin{equation}\label{canny}
\big\| e^{it\Delta} f \big\|_{L^{4}(\R^{4+1})} \leq
\big(32\pi\big)^{-1/4} \|f\|_{L^2(\R^4)}^{1/2}\|\nabla
f\|_{L^2(\R^4)}^{1/2}.
\end{equation}
The constant is sharp and is attained if and only if
$$\widehat{f}(\xi) = \exp({a|\xi|^2+ib\cdot\xi+c}),$$ where
$a,c\in\C$, $b\in\R^d$ and $\Re (a) <0$.
\end{corollary}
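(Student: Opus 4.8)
The plan is to specialise Theorem~\ref{t:Carneiro} to $f_1=f_2=f$, exactly as Corollary~\ref{seven} was extracted from Theorem~\ref{t:FoschiCarneiro}. Since $\|e^{it\Delta}f\|_{L^4}^4 = \|e^{it\Delta}f\,e^{it\Delta}f\|_{L^2}^2$, the case $d=4$ of Theorem~\ref{t:Carneiro}, in which the interaction weight is $|\xi_1-\xi_2|^{d-2}=|\xi_1-\xi_2|^2$, gives
$$
\|e^{it\Delta}f\|_{L^4(\R^{4+1})}^4 \le \text{S}(4,2)\int_{\R^8}|\widehat{f}(\xi_1)|^2|\widehat{f}(\xi_2)|^2|\xi_1-\xi_2|^2\,d\xi_1 d\xi_2.
$$
First I would expand $|\xi_1-\xi_2|^2=|\xi_1|^2+|\xi_2|^2-2\xi_1\cdot\xi_2$ and split the integral into two pieces, mirroring the decomposition into $\text{I}$ and $\text{II}$ in the proof of Corollary~\ref{seven}.

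By symmetry in $(\xi_1,\xi_2)$ and Plancherel's theorem (in the present normalisation $\int_{\R^4}|\widehat{f}|^2\,d\xi=(2\pi)^4\|f\|_{L^2}^2$ and $\int_{\R^4}|\xi|^2|\widehat{f}|^2\,d\xi=(2\pi)^4\|\nabla f\|_{L^2}^2$) the first piece equals $2(2\pi)^8\|f\|_{L^2(\R^4)}^2\|\nabla f\|_{L^2(\R^4)}^2$. For the second, the identity \eqref{theline} gives
$$
\int_{\R^8}|\widehat{f}(\xi_1)|^2|\widehat{f}(\xi_2)|^2\,\xi_1\cdot\xi_2\,d\xi_1 d\xi_2 = \sum_{m=1}^4\Big(\int_{\R^4}|\widehat{f}(\xi)|^2\xi_m\,d\xi\Big)^2\ge 0,
$$
so discarding this nonnegative term yields an upper bound. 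A short computation using $\text{S}(4,2)=2^{-4}(2\pi)^{-11}|\mathbb{S}^3|$ and $|\mathbb{S}^3|=2\pi^2$ shows $2(2\pi)^8\,\text{S}(4,2)=\tfrac{1}{32\pi}$, which is \eqref{canny}.

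The main work is the characterisation, and I expect the delicate step to be the same as in Corollary~\ref{seven}: showing that the discarded term is strictly positive when $\Re(b)\neq 0$. Equality in \eqref{canny} forces equality in the bilinear bound, so Theorem~\ref{t:Carneiro} gives $\widehat{f}(\xi)=\exp(a|\xi|^2+b\cdot\xi+c)$ with $\Re(a)<0$, and in addition the term $\sum_m(\int|\widehat{f}|^2\xi_m\,d\xi)^2$ must vanish. If $\Re(b)=0$ then $|\widehat{f}(\xi)|^2=\exp(2\Re(a)|\xi|^2+2\Re(c))$ is radial, so every first moment $\int|\widehat{f}|^2\xi_m\,d\xi$ vanishes and equality holds throughout; writing $b=i\beta$ with $\beta\in\R^4$ this is the claimed form. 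For necessity I would rotate so that $\Re(b)=(b_1,0,0,0)$ with $b_1>0$ and split the $\xi_1$-integral over $\{\xi_1\ge0\}$ and its reflection to obtain
$$
\int_{\R^4}|\widehat{f}(\xi)|^2\xi_1\,d\xi=\int_{\R^4_+}\exp\big(2\Re(a)|\xi|^2+2\Re(c)\big)\big(\exp(2b_1\xi_1)-\exp(-2b_1\xi_1)\big)\xi_1\,d\xi>0,
$$
which is strictly positive; hence the discarded term does not vanish and \eqref{canny} is strict, completing the characterisation.
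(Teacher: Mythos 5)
Your proposal is correct and follows exactly the route the paper intends: it deduces Corollary~\ref{c:Carneiro} from Theorem~\ref{t:Carneiro} by the same expand-and-discard argument (splitting $|\xi_1-\xi_2|^2$ into the analogues of $\mathrm{I}$ and $\mathrm{II}$ and using the sum-of-squares identity \eqref{theline}) that the paper uses to extract Corollary~\ref{seven} from Theorem~\ref{t:FoschiCarneiro}, which is precisely the derivation the paper attributes to Carneiro. Your constant check $2(2\pi)^8\,\mathrm{S}(4,2)=\tfrac{1}{32\pi}$ is right, and your reflection argument showing strict positivity of the discarded term when $\Re(b)\neq 0$ is the same device the paper uses in the proof of Corollary~\ref{seven} to obtain the smaller maximiser class $b\in i\R^d$, which is exactly the correction of Carneiro's oversight that the paper highlights.
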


Note that the class of maximisers is smaller than that of Theorem
\ref{t:Carneiro} (although in~\cite{Carneiro} it was suggested
otherwise\footnote{E. Carneiro thanks the authors for pointing out
this minor oversight in Corollary 3 of [7]}). The maximisers can be
obtained from $u(0) = \exp(-|\cdot|^2)$ under the action of the
group generated by:
\begin{itemize}
\item[(S1)] $u(t,x)\to u(t+t_0,x+x_0)$ with $t_0\in\R$, $x_0\in\R^d$,
\item[(S2)] $u(t,x)\to \lambda_1 u(\lambda_2^2 t,\lambda_2 x)$ with
$\lambda_1,\lambda_2
>0$.
\item[(S3)] $u(t,x)\to e^{i\theta} u(t,x)$ with $\theta\in\R$.
\end{itemize}

Another well-known symmetry for the Schr\"odinger equation is the
Galilean transformation:
\begin{itemize}
\item[(S4)] $u(t,x)\to \exp\big(-i(x\cdot v+|v|^2t)\big)u(t,x+2v)$ with
$v\in\R^d$.
\end{itemize}

Foschi \cite{Foschi} proved that the maximisers for
\eqref{stricschro} with $d=1,2$ are given by
$$\widehat{f}(\xi) = \exp({a|\xi|^2+b\cdot\xi+c}),$$ where
$a,c\in\C$, $b\in\C^d$ and $\Re (a) <0$, which is a larger class
than that of Corollary~\ref{c:Carneiro}. This is explained by the
fact that
$$
\| e^{it\Delta} f \|_{L_{t,x}^{p}(\mathbb{R}^{d+1})} \|f\|_{L^2(\mathbb{R}^d)}^{-1}
$$
is invariant under the action of (S4), whereas
$$
\| e^{it\Delta} f \|_{L_{t,x}^{p}(\mathbb{R}^{d+1})} \|f\|_{L^2(\mathbb{R}^d)}^{-1/2}\|\nabla f\|_{L^2(\mathbb{R}^d)}^{-1/2}
$$
is not.

For interest and completeness we provide an alternative proof of the
estimate in Theorem \ref{t:Carneiro}, following Foschi
\cite{Foschi}.

\textit{Proof of Theorem \ref{t:Carneiro}}. By the change of
variables $t\to -t$, we can replace the operators $e^{it\Delta}$ by
$e^{-it\Delta}$. Writing
$w(t,x)=e^{-it\Delta}f_1(x)e^{-it\Delta}f_2(x)$, we have
$$
\widetilde{w}(\tau,\xi) = \frac{1}{(2\pi)^{d-1}}\int_{\R^{2d}} \widehat{f_1}(\eta_1)
\widehat{f_2}(\eta_2) \, \delta(\tau -|\eta_1|^2 - |\eta_2|^2)
\delta(\xi - \eta_1 - \eta_2) \,d\eta
$$
because
\begin{eqnarray*}
(e^{-it\Delta} f_j)^{\thicksim}(\tau,\xi) = 2\pi\delta(\tau - |\xi|^2)
\widehat{f}(\xi).
\end{eqnarray*}
Therefore, by the Cauchy--Schwarz inequality,
$|\widetilde{w}(\tau,\xi)|^2$ is majorised by
\begin{equation}\label{klf2}
\frac{I(\tau,\xi)}{(2\pi)^{2(d-1)}} \int_{\R^{2d}} |\widehat{f_1}
(\eta_1)|^2 |\widehat{f_2}(\eta_2)|^2 |\eta_1-\eta_1|^{d-2} \,
\delta(\tau - |\eta_1|^2 - |\eta_2|^2) \delta(\xi - \eta_1 - \eta_2)
\,d\eta,\end{equation} where
$$
I(\tau,\xi) = \int_{\R^{2d}} |\eta_1-\eta_2|^{-(d-2)}\,
\delta(\tau - |\eta_1|^2 - |\eta_2|^2)
\delta(\xi - \eta_1 - \eta_2) \,d\eta.
$$
As before, we shall prove that $I(\tau,\xi)$ is constant.

Firstly, note that on the support of the delta measures, we have
$$
|\eta_1 - \eta_2|^2 = 2(|\eta_1|^2 + |\eta_2|^2) - |\eta_1 + \eta_2|^2
= 2\tau - |\xi|^2.
$$
Therefore
\begin{equation}\label{this} I(\tau,\xi) = (2\tau - |\xi|^2)^{-\frac{d-2}{2}}
\widetilde{I}(\tau,\xi),
\end{equation}
where
$$
\widetilde{I}(\tau,\xi) = \int_{\R^{2d}} \delta(\tau - |\eta_1|^2 - |\eta_2|^2)
\delta(\xi - \eta_1 - \eta_2)\,d\eta = \mu * \mu (\tau,\xi)
$$
and the measure $\mu$ is given by
$$
\mu(\tau,\xi) = \delta(\tau - |\xi|^2).
$$
For each $v \in \mathbb{R}^d$, it is easy to see that $\mu$ is
invariant under the affine map
$$
\left[ \begin{array}{cccccccccc} \tau \\ \xi  \end{array}
\right] \mapsto A_v\left[ \begin{array}{cccccccccc} \tau \\ \xi  \end{array}
\right] + b = \left[ \begin{array}{cccccccccc} \tau + 2\xi \cdot v + |v|^2 \\ \xi +v \end{array}
\right],
$$
where
$$
A_v = \left[ \begin{array}{ccccccccc}  1  & 2v^t \\ 0 & I_d
\end{array} \right] \qquad \text{and} \qquad b = \left[ \begin{array}{cccccccccc}
|v|^2 \\ v  \end{array}
\right].
$$
Since $|\det A_v| = 1$ it follows that
$$
\widetilde{I}(\tau,\xi) =
\widetilde{I}(A_v(\tau,\xi) + 2b) = \widetilde{I} (\tau + 2\xi \cdot
v + 2|v|^2, \xi + 2v).
$$
Choosing $v = -\xi/2$ and by scaling we get
\begin{equation}\label{this2}
\widetilde{I}(\tau,\xi) = \widetilde{I}(\tau - \tfrac{1}{2}|\xi|^2,0) = 2^{-\frac{d-2}{2}}(2\tau-|\xi|^2)^{\frac{d-2}{2}}\widetilde{I}(1,0).
\end{equation}
Finally, a straightforward calculation gives
$$
\widetilde{I}(1,0) = 2^{-\frac{d+2}{2}}|\mathbb{S}^{d-1}|.
$$
Combining this with \eqref{this} and \eqref{this2}, it follows that
$$
I(\tau,\xi) = 2^{-d}|\mathbb{S}^{d-1}|.
$$

Now by Plancherel's theorem, \eqref{klf2} and Fubini, we get
\begin{equation*}\label{last}
\|w\|^{2}_{L^2_{t,x}(\mathbb{R}^{d+1})} = \frac{1}{(2\pi)^{d+1}}\|\widetilde{w}\|^2_{L^2_{\tau,\xi}(\mathbb{R}^{d+1})} \leq
\mbox{S}(d,2)\int_{\R^{2d}} |\widehat{f_1}(\eta_1)|^2 |\widehat{f_2}(\eta_2)|^2
|\eta_1 - \eta_2|^{d-2} \,d\eta,
\end{equation*}
as required.

If $\widehat{f}_j(\eta_j) = \exp(a|\eta_j|^2 + b\cdot \eta_j +
c_j)$, where $a,c_1,c_2 \in \mathbb{C}, b \in \mathbb{C}^d$ with
$\mbox{Re}(a) < 0$, then
$$
\widehat{f}_1(\eta_1)\widehat{f}_2(\eta_2) = \exp (a\tau + b\cdot \xi + c_1 + c_2)
$$
on the support of the delta measures. Hence, for such $f_j$ there is
equality in the application of Cauchy--Schwarz in \eqref{klf2} which
implies that the constant $\mbox{S}(d,2)$ is sharp.

As before, if $f_1,f_2$ are maximisers then it is easy to see that
$f_1=\lambda f_2$ for some $\lambda \in \C$. The characterisation of
maximisers in Theorem \ref{t:Carneiro} now follows from
\cite{Carneiro} where this was done for the linear estimate (i.e.
Theorem \ref{t:Carneiro} with $f_1 = f_2$). \qed


To conclude we present a $k$-linear generalisation of Theorem
\ref{t:Carneiro} which is analogous to Theorem
\ref{t:FoschiCarneiro}.

\begin{theorem} \label{t:last}
Suppose that $d,k \geq 2$ and let $ \beta(k) = \frac{d(k-1)}{2}-1$.
Let $K : (\mathbb{R}^d)^k \rightarrow [0,\infty)$ be given by
$$
K(\eta) = \bigg( \sum_{1 \leq i < j \leq k}
|\eta_i - \eta_j|^2 \bigg)^{1/2}
$$
for $\eta = (\eta_1,\ldots,\eta_k) \in (\mathbb{R}^d)^k$. Then the
inequality
\begin{equation} \label{e:FoschiCarneiroSchro}
\Big\| \prod_{j=1}^k e^{it\Delta}f_j
\Big\|_{L^{2}_{t,x}(\mathbb{R}^{d+1})}^{2} \leq \emph{S}(d,k) \int_{\R^{kd}}
\prod_{j=1}^k|\widehat{f}_j(\eta_j)|^2 \,K(\eta)^{2\beta(k)} \,d\eta
\end{equation}
holds with sharp constant given by
$$
\emph{S}(d,k) = \pi(2\pi)^{-d(2k-1)} k^{-\frac{dk}{2}+1} |\mathbb{S}^{(k-1)d-1}|
$$
which is attained if and only if
\begin{equation*}|\xi|\widehat{f_j}(\xi)=\exp(a|\xi|+b\cdot\xi+c_j),\end{equation*}
where $a,c_1,\ldots,c_k\in \C$, $b\in\C^d$, $\Re(a)<0$ and
$|\Re(b)|<-\Re(a)$.
\end{theorem}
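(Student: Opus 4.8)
The plan is to follow the same template as the proof of Theorem~\ref{t:FoschiCarneiro}, since Theorem~\ref{t:last} is its Schr\"odinger analogue, and to bootstrap off the already-established base case $k=2$ (Theorem~\ref{t:Carneiro}). Writing $w_k(t,x) = \prod_{j=1}^k e^{it\Delta}f_j(x)$ and taking the space-time Fourier transform, the identity $(e^{-it\Delta}f_j)^{\thicksim}(\tau,\xi) = 2\pi\delta(\tau - |\xi|^2)\widehat{f}_j(\xi)$ gives, up to the change of variables $t \to -t$,
\begin{equation*}
\widetilde{w_k}(\tau,\xi) = \frac{1}{(2\pi)^{d(k-1)-1}}\int_{\R^{kd}} \prod_{j=1}^k \widehat{f_j}(\eta_j)\, \delta\Big(\tau - \sum_{j=1}^k |\eta_j|^2\Big)\delta\Big(\xi - \sum_{j=1}^k \eta_j\Big)\,d\eta.
\end{equation*}
An application of Cauchy--Schwarz, mirroring \eqref{klf2}, majorises $|\widetilde{w_k}(\tau,\xi)|^2$ by a product of the weighted integral on the right-hand side of \eqref{e:FoschiCarneiroSchro} against the same delta measures, times a factor $I_k(\tau,\xi)$ defined by integrating $K(\eta)^{-2\beta(k)}$ against those measures. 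The key algebraic fact is that on the support of the delta measures, $2K(\eta)^2 = k\sum_j|\eta_j|^2 - |\sum_j \eta_j|^2 = k\tau - |\xi|^2$, which lets one factor out the weight exactly as in \eqref{klf}, reducing matters to computing the pure cone-convolution $\widetilde{I_k}(\tau,\xi) = \mu * \cdots * \mu(\tau,\xi)$ with $\mu(\tau,\xi) = \delta(\tau - |\xi|^2)$.

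The heart of the argument is therefore a lemma computing $\widetilde{I_k}$, directly parallel to Lemma~\ref{l:Iwave}. First I would record the invariances of $\widetilde{I_k}$: it is invariant under the $k$-fold composition of the affine maps $A_v(\tau,\xi)+b = (\tau + 2\xi\cdot v + |v|^2, \xi + v)$ appearing in the proof of Theorem~\ref{t:Carneiro} (each has unit Jacobian), and under parabolic scaling $\widetilde{I_k}(\lambda^2\tau,\lambda\xi) = \lambda^{2\beta(k)}\widetilde{I_k}(\tau,\xi)$ by homogeneity. Choosing $v = -\xi/k$ (the analogue of $v=-\xi/2$) one collapses to $\widetilde{I_k}(\tau,\xi) = (k\tau-|\xi|^2)^{\beta(k)}c_k$ for a constant $c_k$ depending only on $d,k$, up to the normalising powers of $k$. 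The recursion is then the Schr\"odinger version of \eqref{e:kfromk-1}: integrating out $\eta_1$ expresses $\widetilde{I_k}(1,0)$ as an integral of $\widetilde{I_{k-1}}(1-|\eta_1|^2, -\eta_1)$ against $d\eta_1$, and substituting the inductive formula produces, in polar coordinates, a one-dimensional integral of the form $\int_0^{r_0}(1 - r^2)^{\beta(j)} r^{d-1}\,dr$, which is a Beta-function value after the substitution $s = r^2$. Multiplying these together across $j$ from $2$ to $k-1$ and tracking the powers of $2$, $\pi$, $k$ and $|\mathbb{S}^{(k-1)d-1}|$ yields the stated constant $\text{S}(d,k)$.

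The main obstacle I expect is the bookkeeping needed to obtain the \emph{precise} closed form of the constant, and in particular the appearance of the single sphere $|\mathbb{S}^{(k-1)d-1}|$ of the full $(k-1)d$-dimensional variable rather than a product of lower-dimensional sphere volumes. This suggests that, unlike in the wave case where the Lorentz slicing kept each $\eta_j$ separate, one should compute $\widetilde{I_k}(1,0)$ more globally — for instance by evaluating the Gaussian integral $\int_{\R^{kd}} \exp(-s\sum_j|\eta_j|^2)\delta(\sum_j\eta_j)\,d\eta$ against a test function in $\tau$ and reading off $\widetilde{I_k}$ by Laplace/Mellin inversion, which naturally produces the $(k-1)d$-dimensional sphere after quotienting by the constraint $\sum_j\eta_j = 0$ and the factor $k^{-dk/2+1}$ from diagonalising the quadratic form $\sum_j|\eta_j|^2$ restricted to that hyperplane. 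Once the constant and sharpness (equality in Cauchy--Schwarz for the exponential data, exactly as after \eqref{klf2}) are in hand, the characterisation of maximisers is \emph{identical} to that carried out in Section~\ref{characterization}: the symmetry of the functional equation \eqref{gw} forces $g_1 = \cdots = g_k$, continuity follows from the finiteness of the analogue of \eqref{e:RHS} together with the cited results of Foschi, the exponential form $\widehat{f_j}(\xi) = |\xi|^{-1}\exp(a|\xi|+b\cdot\xi+c_j)$ is extracted from the multiplicative functional equation on the cone, and the conditions $\Re(a)<0$, $|\Re(b)|<-\Re(a)$ are exactly the finiteness conditions established there.
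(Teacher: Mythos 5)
Your route is exactly the one the paper gestures at but does not carry out: the paper itself gives no proof of Theorem~\ref{t:last}, deferring to Carneiro \cite{Carneiro} (who follows Hundertmark--Zharnitsky \cite{huza}) and merely remarking that the proof of Theorem~\ref{t:Carneiro} generalises. For the inequality and the constant, your implementation of that remark is essentially sound. The reduction works as you say: Galilean invariance of the $k$-fold convolution with $v=-\xi/k$, plus parabolic scaling, give $\widetilde{I_k}(\tau,\xi)=k^{-\beta(k)}(k\tau-|\xi|^2)^{\beta(k)}\widetilde{I_k}(1,0)$, and your instinct to abandon the wave-style recursion \eqref{e:kfromk-1} in favour of a global evaluation of $\widetilde{I_k}(1,0)$ is correct: integrating out $\delta(\sum_j\eta_j)$ leaves $\int_{\R^{(k-1)d}}\delta(1-Q(\eta'))\,d\eta'$ with $Q(\eta_1,\dots,\eta_{k-1})=\sum_{j\le k-1}|\eta_j|^2+\big|\sum_{j\le k-1}\eta_j\big|^2$, whose matrix has eigenvalues $k$ (multiplicity $d$) and $1$ (multiplicity $(k-2)d$), hence determinant $k^d$; this yields $\widetilde{I_k}(1,0)=\tfrac{1}{2}k^{-d/2}|\mathbb{S}^{(k-1)d-1}|$, and since $\beta(k)+\tfrac{d}{2}=\tfrac{dk}{2}-1$, assembling the powers of $2\pi$ reproduces $\mathrm{S}(d,k)=\pi(2\pi)^{-d(2k-1)}k^{-\frac{dk}{2}+1}|\mathbb{S}^{(k-1)d-1}|$ exactly. (The recursion would also work, collapsing to the single sphere via Beta--Gamma identities, but is messier.) One slip to fix: on the support of the delta measures $K(\eta)^2=k\tau-|\xi|^2$ with \emph{no} factor of $2$ — you imported the $2$ from the wave case, where $(\sum_j|\eta_j|)^2-|\sum_j\eta_j|^2=2K(\eta)^2$; as written, your bookkeeping would inject a spurious $2^{\beta(k)}$ into the constant.

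The genuine gap is your final paragraph, on the maximisers. Equality in the Cauchy--Schwarz step here forces $\prod_j\widehat{f_j}(\eta_j)=G\big(\sum_j|\eta_j|^2,\sum_j\eta_j\big)$ almost everywhere — a functional equation on the \emph{paraboloid}, not the cone — and its nontrivial continuous solutions are Gaussians $\widehat{f_j}(\xi)=\exp(a|\xi|^2+b\cdot\xi+c_j)$ with $\Re(a)<0$ and $b\in\C^d$ unrestricted, exactly as in Theorem~\ref{t:Carneiro} and in Carneiro's $k$-linear result. The cone profiles $|\xi|\widehat{f_j}(\xi)=\exp(a|\xi|+b\cdot\xi+c_j)$ that you claim to extract do \emph{not} make the product a function of $(\sum_j|\eta_j|^2,\sum_j\eta_j)$, so they do not give equality; the cone lemmas of Section~\ref{characterization} (the algebra of the solid cone and the multiplicative equation there) have no purchase in this setting and must be replaced by their paraboloid analogues from \cite{Foschi}, as used in \cite{Carneiro}. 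Your sketch is thus internally inconsistent: the derivation you describe would output Gaussians, contradicting the exponential-on-the-cone form you then assert, and the side condition $|\Re(b)|<-\Re(a)$ is vacuous once Gaussian decay is present. You have in effect reproduced what appears to be a wave-to-Schr\"odinger copy slip in the printed statement of Theorem~\ref{t:last} (compare it with Theorem~\ref{t:Carneiro}) rather than proved a correct characterisation.
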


Theorem \ref{t:last} was proven by Carneiro in \cite{Carneiro}
following the argument of Hundertmark--Zharnitsky \cite{huza}. We
remark that it is possible to generalise the above proof of Theorem
\ref{t:Carneiro} to prove \eqref{e:FoschiCarneiroSchro}.

\vspace{1em}

The authors wish to thank Jon Bennett and Thomas Duyckaerts for
helpful conversations. They also thank Sanghyuk Lee and Seoul
National University, where part of this research was conducted, for
their warm hospitality.


\begin{thebibliography}{MMM}


\bibitem{bage} H. Bahouri and P. G\'erard, {\it High frequency approximation of
solutions to critical nonlinear wave equations}, Amer. J. Math.
\textbf{121} (1999), 131--175.


\bibitem{ba} B. Barcel\'o, {\it On the restriction of the Fourier transform to a conical surface},
Trans. Amer. Math. Soc. {\bf 292} (1985), 321--333.

\bibitem{beva} P. B\'egout\ and\ A. Vargas, \textit{Mass concentration phenomena for the
$L\sp 2$-critical nonlinear Schr\"odinger equation}, Trans. Amer.
Math. Soc. {\bf 359} (2007), 5257--5282.

\bibitem{bebecahu} J. Bennett, N. Bez, A. Carbery and D.
Hundertmark, \textit{Heat-flow monotonicity of Strichartz norms},
 Anal. PDE  \textbf{2}  (2009),  147--158.

\bibitem{bo} J. Bourgain, \textit{Estimates for cone multipliers},
Geometric Aspects of Functional Analysis, Oper. Theory Adv. Appl.
\textbf{77}, Birkh\"auser (1995), 41--60.




\bibitem{bulut} A. Bulut, \textit{Maximizers for the Strichartz inequalities for the
wave equation}, arXiv:0905.1678.


\bibitem{Carneiro} E. Carneiro, \textit{A sharp inequality for the Strichartz norm},
Int. Math. Res. Not. {\bf 16} (2009), 3127--3145.

\bibitem{chsh} M. Christ and S. Shao, \textit{Existence of extremals for a Fourier restriction
inequality}, arXiv:1006.4319.

\bibitem{chsh2} \bysame, \textit{On the extremizers of an adjoint Fourier restriction inequality}, arXiv:1006.4318.

\bibitem{cake} R. Carles and S. Keraani, {\it On the role of quadratic oscillations
in nonlinear Schr¨odinger equations. II. The $L^2$-critical case},
Trans. Amer. Math. Soc. \textbf{359} (2007), 33--62.


\bibitem{dumero} T. Duyckaerts,  F. Merle\ and\ S. Roudenko, \textit{Maximizers for the Strichartz norm for small
solutions of mass-critical NLS},  Ann. Sc. Norm. Super. Pisa Cl.
Sci., to appear.

\bibitem{favevi} L. Fanelli, L. Vega\ and N. Visciglia, \textit{On the existence of maximizers for a family of restriction theorems}, arXiv:1007.2063.

\bibitem{fest} C. Fefferman, \textit{Inequalities for strongly singular convolution operators}, Acta Math. {\bf 124} (1970), 9--36.

\bibitem{Foschi} D. Foschi, \textit{Maximizers for the Strichartz inequality}, J. Eur. Math. Soc. {\bf 9}
(2007), 739--774.


\bibitem{fokl} D. Foschi and S. Klainerman, \textit{Bilinear space-time estimates for homogeneous wave equations},
Ann. Sci. \'{E}cole Norm. Sup. (4)  \textbf{33}  (2000), 211--274.

\bibitem{huza} D. Hundertmark and V. Zharnitsky, \textit{On sharp
Strichartz inequalities in low dimensions}, Int. Math. Res. Not.
(2006), Art. ID 34080, 18 pp.


\bibitem{keme} C.E. Kenig and F. Merle, \textit{Global well-posedness,
scattering and blow-up for the energy critical focusing non-linear
wave equation},  Acta Math.  \textbf{201}  (2008) 147--212.

\bibitem{klma} S. Klainerman and M. Machedon, \textit{Space-time estimates for null
forms and the local existence theorem}, Comm. Pure Appl. Math.
\textbf{46} (1993), 1221--1268.

\bibitem{km2} \bysame,
\emph{Remark on Strichartz--type inequalities.} With appendices by
Jean Bourgain and Daniel Tataru. Internat. Math.  Res. Notices
\textbf{5} (1996), 201--220.

\bibitem{km3}
\bysame, \emph{On the regularity properties of a model problem
related to wave maps}, Duke Math. J. \textbf{87} (1997), 553--589.

\bibitem{ku} M. Kunze, \textit{On the existence of a maximizer for the Strichartz
inequality}, Comm. Math. Phys. \textbf{243} (2003), 137--162.

\bibitem{lerova} S. Lee, K.M. Rogers, and A. Vargas, \textit{Sharp null form
estimates for the wave equation in $\Bbb R^{3+1}$},  Int. Math. Res.
Not. IMRN 2008, Art. ID rnn 096, 18 pp.

\bibitem{leva} S. Lee and A. Vargas, \textit{Sharp null form estimates for the wave
equation},  Amer. J. Math.  \textbf{130}  (2008), 1279--1326.

\bibitem{meve} F. Merle and L. Vega, \textit{Compactness at blow-up time for $L^2$ solutions of the critical nonlinear
Schr¨odinger equation in 2D}, Internat. Math. Res. Notices (1998),
no. 8, 399--425.



\bibitem{shao} S. Shao, \textit{Maximizers for the Strichartz and the Sobolev--Strichartz
inequalities for the Schr\"odinger equation}, Electron. J.
Differential Equations (2009), No. 3, 13 pp.

\bibitem{stein} E.M. Stein, {\it Singular integrals and differentiability properties of
functions}, Princeton University Press (1970).

\bibitem{bigstein} E.M. Stein, \textit{Harmonic Analysis}, Princeton
Univ. Press, Princeton, N. J. (1993).


\bibitem{st} R.S. Strichartz, \textit{Restrictions of Fourier transforms to quadratic
surfaces and decay of solutions of wave equations}, Duke Math. J.
{\bf 44} (1977), 705--714.

\bibitem{ta0} T. Tao, \textit{Endpoint bilinear restriction theorems for the cone,
and some sharp null form estimates},  Math. Z.  \textbf{238} (2001),
215--268.








\bibitem{wo} T. Wolff, \textit{A sharp bilinear cone restriction estimate}, Ann. of Math.
(2) \textbf{153} (2001), 661--698.

\end{thebibliography}
\end{document}